\theoremstyle{definition}
\newtheorem{definition}{Definition}
\theoremstyle{plain}
\newtheorem{theorem}{Theorem}
\newtheorem{lemma}{Lemma}
\title{ITSO: A novel Inverse Transform Sampling-based Optimization algorithm for stochastic search}
\author{\normalsize Nikolaos P. Bakas\thanks{Computation-based Science and Technology Research Center, The Cyprus Institute, 20 Konstantinou Kavafi Street, 2121, Aglantzia Nicosia, Cyprus. e-mail: n.bakas@cyi.ac.cy}, Vagelis Plevris\thanks{Department of Civil Engineering and Energy Technology, OsloMet—Oslo Metropolitan University, Pilestredet 35, Oslo 0166, Norway. e-mail: vageli@oslomet.no}, Andreas Langousis\thanks{Department of Civil Engineering, University of Patras, 265 04 Patras, Greece. e-mail: andlag@alum.mit.edu}, Savvas A. Chatzichristofis\thanks{Intelligent Systems Lab \&  Department of Computer Science, Neapolis University Pafos, 2 Danais Avenue, 8042 Pafos, Cyprus. e-mail: s.chatzichristofis@nup.ac.cy} }
\date{}
\begin{document}

\maketitle

\begin{abstract}
Optimization algorithms appear in the core calculations of numerous Artificial Intelligence (AI) and Machine Learning methods, as well as Engineering and Business applications. Following recent works on the theoretical deficiencies of AI, a rigor context for the optimization problem of a \textit{black-box} objective function is developed. The algorithm stems directly from the theory of probability, instead of a presumed inspiration, thus the convergence properties of the proposed methodology are inherently stable. In particular, the proposed optimizer utilizes an algorithmic implementation of the $n$-dimensional inverse transform sampling as a search strategy. No control parameters are required to be tuned, and the trade-off among exploration and exploitation is by definition satisfied. A theoretical proof is provided, concluding that only falling into the proposed framework, either directly or incidentally, any optimization algorithm converges in the fastest possible time. The numerical experiments, verify the theoretical results on the efficacy of the algorithm apropos reaching the optimum, as fast as possible.

\vspace{5mm}
Keywords: Stochastic Optimization, Inverse Transform Sampling, Black-box Function, Global Convergence.
\end{abstract}

\section{Introduction}
Despite the numerous research works and industrial applications of Artificial Intelligence algorithms, they have been criticized about lacking a solid theoretical background \cite{Hutson2018}. The empirical results demonstrate impressive performance, however, their theoretical foundation and analysis are often vague \cite{Sculley2018}. Machine Learning (ML) models, frequently aim at identifying an optimal solution \cite{sra2012optimization,bottou2018optimization}, which is computationally hard and attempts to explain the procedure often based on the evaluation of the function's Gradients \cite{Rudin2019,wu2017bayesian}. Accordingly, the identification of whether a stochastic algorithm will work or not remains an open question for many research and real-world applications \cite{doerr2020probabilistic,doerr2020complexity,papadrakakis2005design,papadrakakis2001optimum}. A vast number of optimization algorithms have been developed and applied for the solution of the corresponding problems \cite{moayyeri2019cost,plevris2011hybrid,Lagaros2009,LAGAROS2006}, as well as mathematical proofs regarding their algorithmic convergence \cite{bull2011convergence,rudolph1994convergence,clerc2002particle}, however, their basic formulation often stems from nature-inspired procedures \cite{opara2019differential,razmjooy2019comprehensive} and not solid mathematical frameworks. Accordingly, a vast number of research works have been published in order to investigate the performance of black-box algorithms \cite{munoz2017performance,audet2017derivative,audet2016blackbox}. 

In its elementary form, the purpose of efficient optimization algorithms is to find the argument yielding the minimum value of a \textit{black-box} function $f(x)$, defined on a set $A$, $f:A \to \mathbb {R}^n$. Accordingly, the inverse problem of maximization is the minimization of the negated function $-f(x)$, while problems with multiple objective functions often utilize single function optimization algorithms to attain the best possible solution. $A$ is assumed a compact subset of the Euclidean space ${{\mathbb{R}}^{n}}$, where $n$ is the number of dimensions of the set $A$, however, the proposed method applies similarly to discrete and continuous topological spaces. The unknown, black-box function $f$, returns values for the given input $x_{ij}={{\mathbf{x}}_{i}}=({{x}_{i1}},{{x}_{i2}},\cdots ,{{x}_{in}})$ at each computational discrete time step $i=\left\{ 1,2,\ldots ,{{f}_{e}} \right\}$, where ${{f}_{e}}$ is the number of maximum function evaluations. 
The sought solution is a vector ${{\mathbf{x}}_{min}}\in A$, such that $f({{\mathbf{x}}_{min}})\le f(\mathbf{x}),\forall \mathbf{x}\in A$, which may be written by 
\begin{equation}
\begin{split}
{{\mathbf{x}}_{min}}={\operatorname {arg\,min} }\,f(\mathbf{x}):=\{\mathbf{x}\in A \subseteq \mathbb{R}^n 
\\
\mid \forall \mathbf{y}\in A:f(\mathbf{y})\ge f(\mathbf{x})\}
\end{split}
\end{equation}

The purpose of this work is to provide a rigor context for the optimization problem of a \textit{black-box} function, by adhering to the Probability Theory, aiming at identifying the best possible solution $\mathbf{x}_{min}$, within the given iterations $f_e$, during the execution of the algorithm. 


The rest of the paper is organized as follows:
In Section \ref{sec:iit}, the proposed Inverse Transform Sampling Optimizer (ITSO) is presented. Additionally, the same Section provides in details  the theoretical formulation of the algorithm as well as some programming and implementation techniques. Illustrative examples of the optimization history are also comprised. In Section \ref{sec:proof} the theoretical proof of convergence is provided, as well as Lemma \ref{lemma-conv}, deriving that the suggested optimization framework is the fastest possible. The numerical experiments are divided into three groups. Subsection \ref{sec:bbf} is about the comparison with 13 nonlinear loss functions, 17 optimization methods, for 10 and 20 dimensions of search space, and 5000 and 10000 iterations per dimension. Section \ref{sec:prog}, briefly presents the  programming techniques that were investigated, in order to implement the proposed method into a computer code. Finally, the conclusions are drawn in Section \ref{sec:conclusions}.


\section{Optimization by Inverse Transform Sampling}
\label{sec:iit}

Let the probability distribution of the optimal values of $f$, be considered as the product of some monotonically decreasing kernel $k$ over $f$ at some time-step $i$ and hence given input ${{\mathbf{x}}_{i}}$. This assumption stands in the foundation of the method and can be considered as rational, instead of an arbitrary selection strategy, inspired by natural or other phenomena. It is a straightforward application of the Probability theory to the problem. The kernel may be considered as parametric, concerning time $i$. The selection of the kernel $k$ should satisfy the condition that its limit is the Dirac delta function centered at ${\operatorname {arg\,min} }\,f$,
\begin{equation}
    {\mathop{\lim_{i\to \infty }}}\,k_i(f)={{\delta }_{m}}(x),
    \label{eq:dirac}
\end{equation}
where $\delta_{m}$ denotes the Dirac function centered at ${\operatorname {arg\,min} }\,f$. Although the selection of the kernel $k$ is ambiguous, it can be chosen among a variety of functions satisfying Equation \ref{eq:dirac}, such as the Gaussian:
\begin{equation}
k_i(f)=exp(-{{f(\mathbf x_i)}^{2}}g(i)),
\end{equation}
where $g(i)$ is a time increasing pattern. The function $g(i)$ controls the shape of the kernel $k$, approximating numerically Equation \ref{eq:dirac}. Additionally, we may use:
\begin{equation}
k_i({f}) = {\operatorname {max} }\,f-f(\mathbf x_i),
\end{equation}
\begin{equation}
k_i({f}) = 1-\frac{f(\mathbf x_i)-{\operatorname {min} }\,f+e_i}{{\operatorname {max} }\,f-{\operatorname {min} }\,f+e_i} \wedge {e_i\to 0},
\end{equation}
or any other non-negative Lebesgue-integrable function, which reverses the order of the given set of all ${{f}_{{\hat{i}}}}$, where $\hat{i}$ is the permutation of the indices $1,2,...,i$, such that the sequence of ${{f}_{{\hat{i}}}}$ being monotonically strictly decreasing. The duplicate values of ${{f}_{{\hat{i}}}}$ should be extracted to avoid numerical instabilities. These duplicates often appeared in the empirical calculations, especially when the algorithm was close to a local or global stationary point. A variety of kernel functions $k$ were investigated, and the results weren't affected significantly, even when distorting $k(f)$ with some random noise ${X}'\sim \mathcal{U}(a,b)\forall k({{f}_{{\hat{i}}}})\in (a,b)$. 

Accordingly, for each dimension $j$ of the vector space $A$, the marginal probability density function ${P}_{{{X}_{j}}}$ is obtained numerically from the kernel function: 
\begin{equation}
    {{P}_{{{X}_{j}}}}({x}_{ij})=k(f({{x}_{ij}}))\forall j\in \{1,2,...n\}.
    \label{eq:pdf}
\end{equation}
${{P}_{{{X}_{j}}}}({{x}_{ij}})dx$ is the probability that ${\operatorname {arg\,min} }\,f$ falls within the infinitesimal interval $[{{x}_{ij}}-dx/2,{{x}_{ij}}+dx/2]$. The corresponding cumulative distribution function (CDF) ${{F}_{{{X}_{j}}}}$, can be calculated by:
\begin{equation}
    {{F}_{{{X}_{j}}}}({{x}_{ij}})=\int_{l{{b}_{j}}}^{{{x}_{ij}}}{{{P}_{{{X}_{j}}}}}({{\xi}})d\xi,
    \label{eq:cdf}
\end{equation}
where $l{{b}_{j}}$ is the lower bound of the $j^{th}$ dimension of the vector space $A$ and can be numerically evaluated by some numerical integration rule, such as the Riemann sum $S_j=\sum\limits_{i=1}^{n}{{{P}_{{{X}_{j}}}}}(x_{ij}^{*})\Delta {{x}_{ij}}$, with ${{{P}_{{{X}_{j}}}}}(x_{ij}^{*})$  computed by the application of the kernel $k$ on some ${x}_{ij}$, such that  $f(x_{ij}^{*})=(f({{x}_{ij}})+f({{x}_{i-1,j}}))/2$, or another approximation scheme. 
In the following pseudo-code \ref{al:ITSO}, the algorithmic implementation of the Inverse Transform Sampling method is demonstrated. The symbols are also noted in the Nomenclature section.

A graphical demonstration of the evolution of the Probability Density, as well as the corresponding Cumulative Distribution Functions, is presented in Figure
\ref{fig:cdf}, for $f(x)=(x-5)^2$, which is function with one extreme value and in Figure  \ref{fig:pdf-cdf}, for $f(x)=sin(x+0.7)+0.01*(x+0.7)^2$, which has many extrema. Interestingly, as the function evaluations increase, the CDF, is characterised by sharp slopes, which are positioned in regions where the PDF exhibits high values, and hence function $f(x)$ attends its lows. Figures \ref{fig:cdf} and \ref{fig:pdf-cdf}, offer an intuitive representation of the procedure for finding the minimum of the function, within the suggested framework.




\begin{algorithm}[!ht]
\label{ITSO}
\SetAlgoLined
\KwData{$A,{{f}_{e}},n$}
\KwResult{${\mathbf{x}^{*}={\operatorname {arg\,min} }\,f}$, ${{f}^{*}}=f(\mathbf{x}^*)$}
\While{$i\le {{f}_{e}}$}{
    $j\leftarrow \mathcal{U}\left\{ 1,n \right\}$\;
    $r_i \leftarrow \mathcal{U}(0,1)$\;
    SORT $x_{ij} \hspace{2mm} \forall i$\; 
    ${{x}_{ij}}\leftarrow {F_j}^{-1}(r_i)$\; 
    ${{f}_{i}}=f(\mathbf{x}_i)$\;
    \eIf{${{f}_{i}}\le {{f}^{*}}$}{
    ${{f}^{*}}\leftarrow {{f}_{i}} $\;
    ${{x}^{*}_j}\leftarrow {{x}_{ij}}$\;
    }{${{x}_{ij}}\leftarrow x^*_j$\;}}
\Return $\mathbf x^* = {\operatorname {arg\,min} }\,f$
\caption{ITSO-Mathematical Framework}
\label{al:ITSO}
\end{algorithm}

\par

\begin{figure}[!ht]   
\centering
\subfloat[PDF, $i=3$]{\includegraphics[width=0.2\textwidth, keepaspectratio]{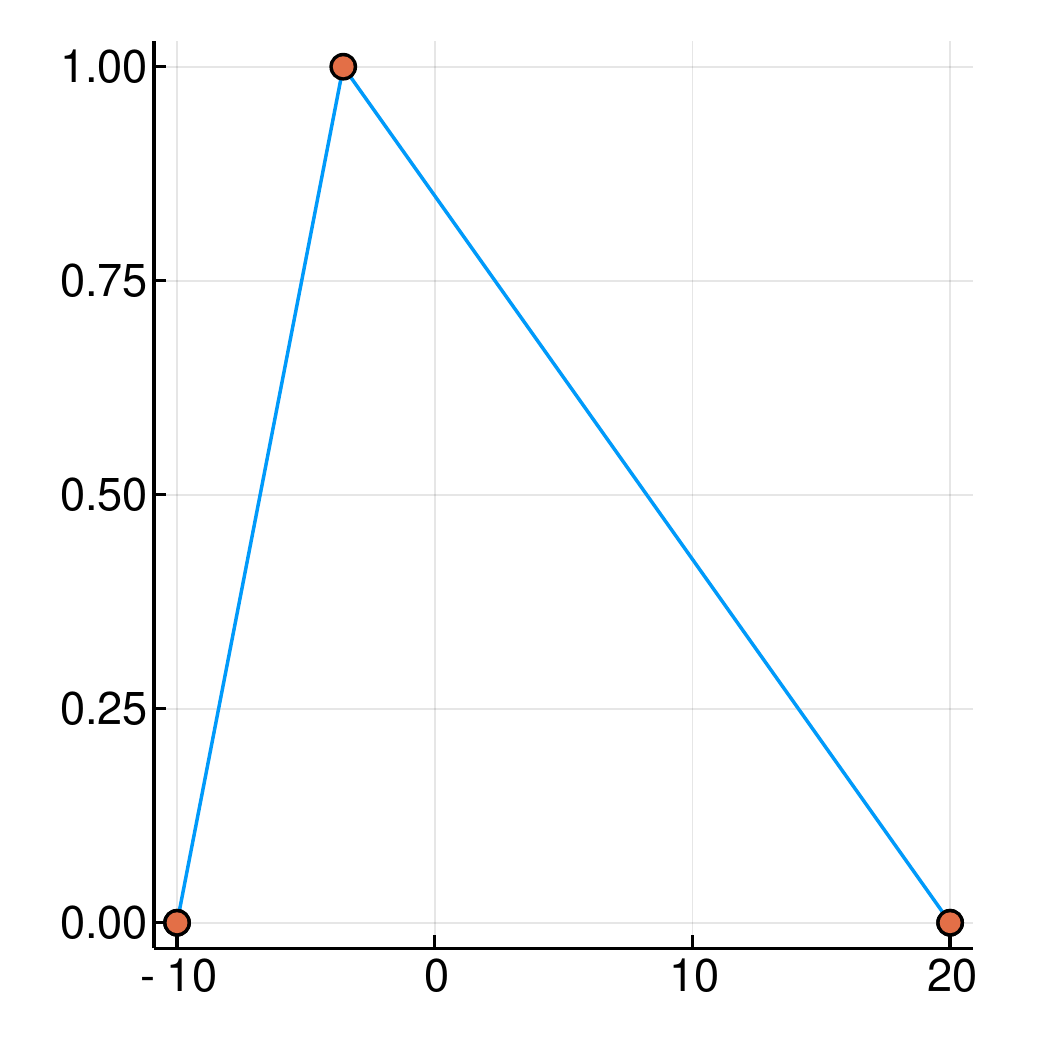}}
\subfloat[CDF, $i=3$]{\includegraphics[width=0.2\textwidth, keepaspectratio]{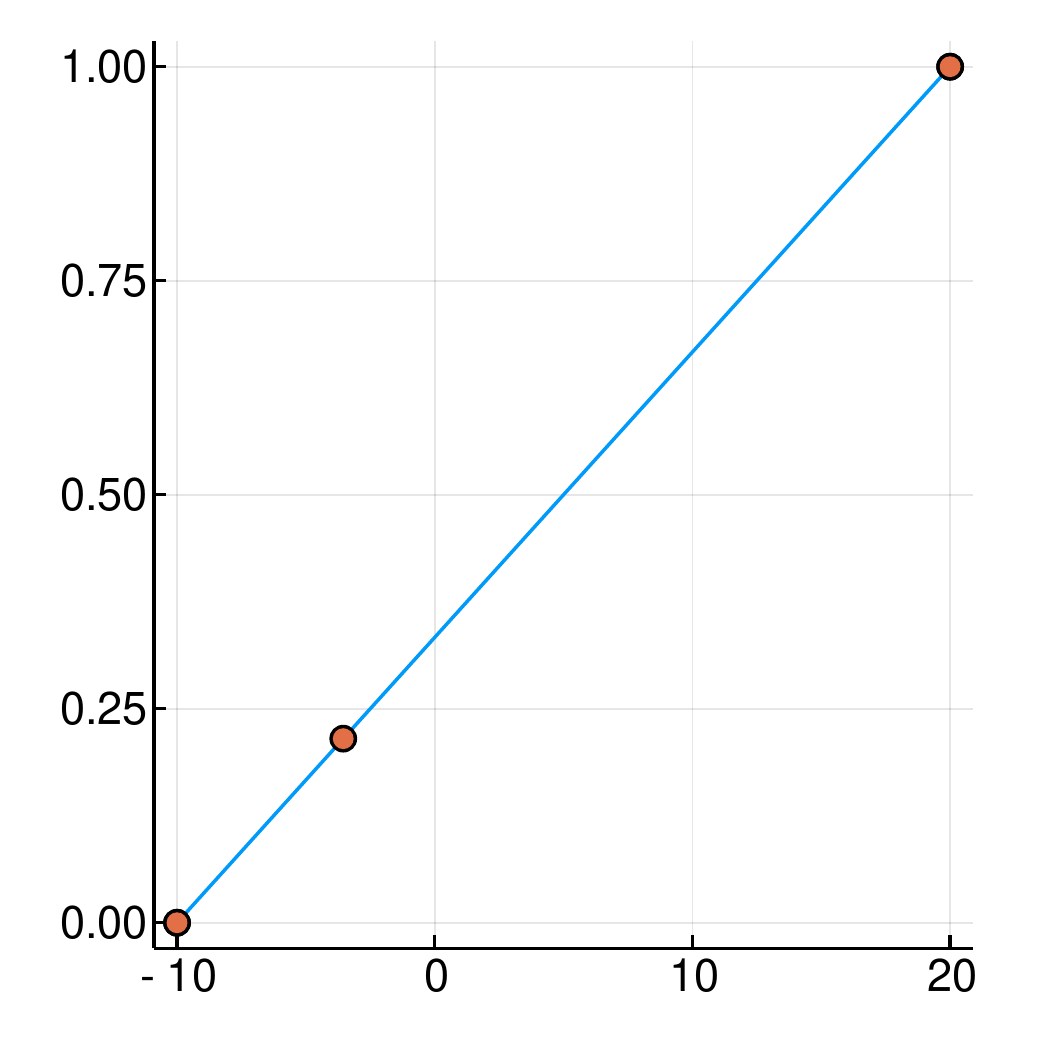}}
\subfloat[PDF,  $i=100$]{\includegraphics[width=0.2\textwidth, keepaspectratio]{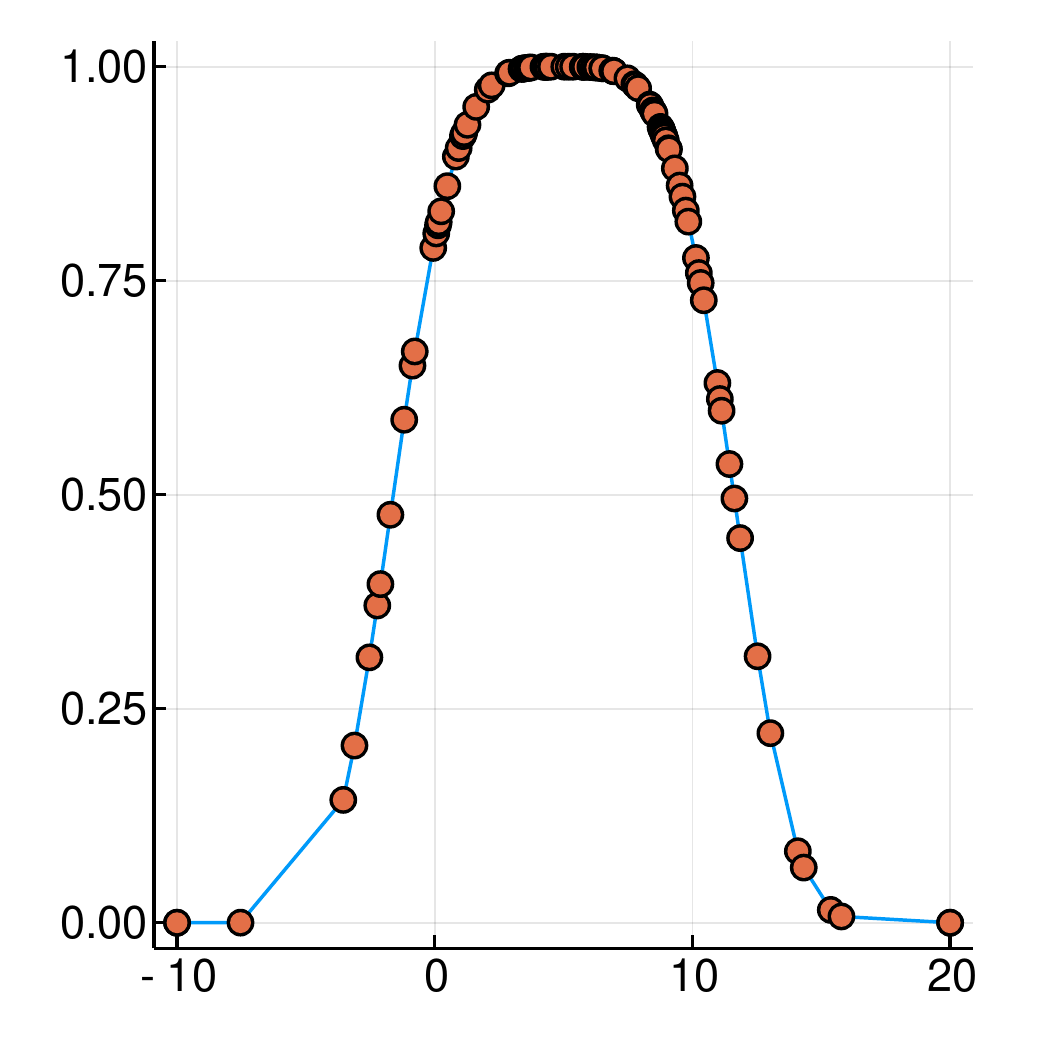}}
\subfloat[CDF,  $i=100$]{\includegraphics[width=0.2\textwidth, keepaspectratio]{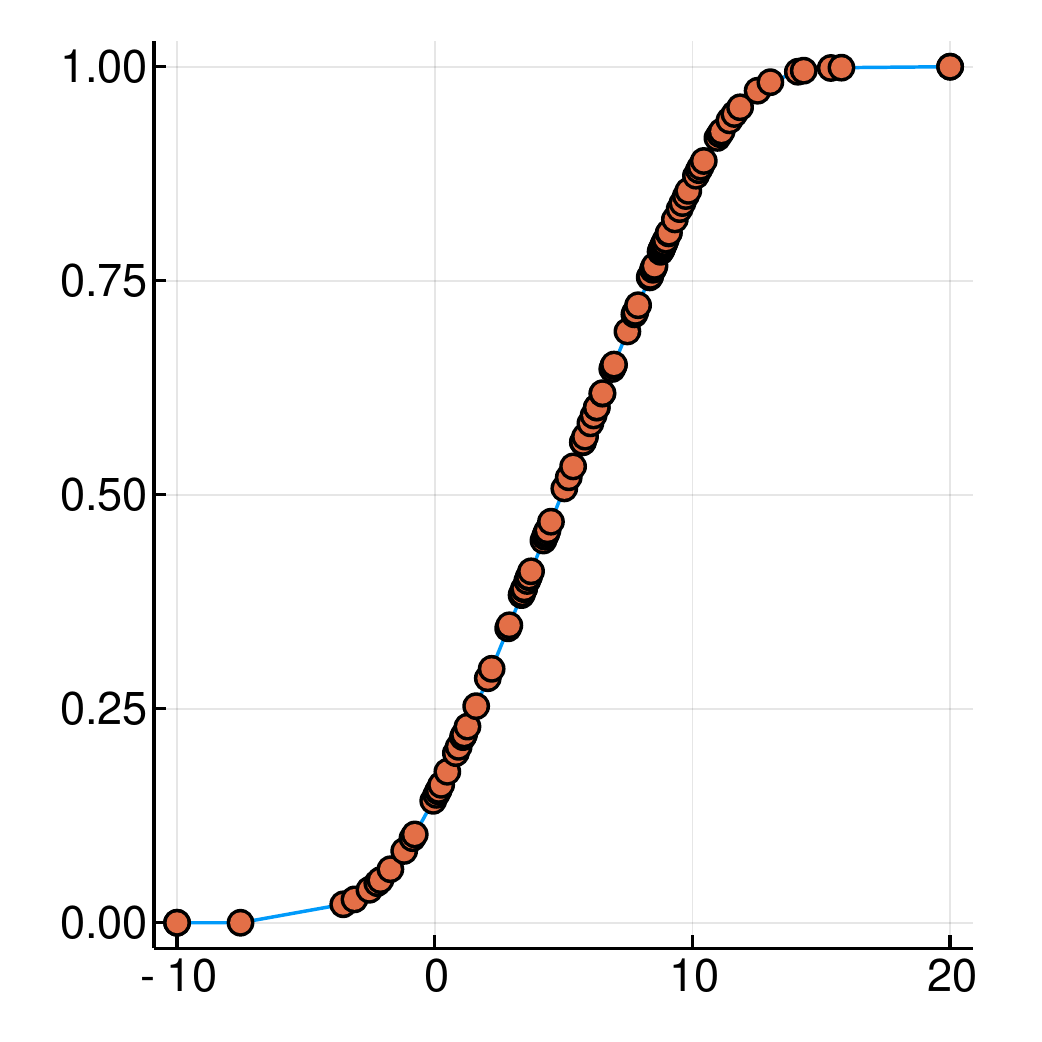}}
\vskip\baselineskip
\subfloat[PDF,  $i=350$]{\includegraphics[width=0.2\textwidth, keepaspectratio]{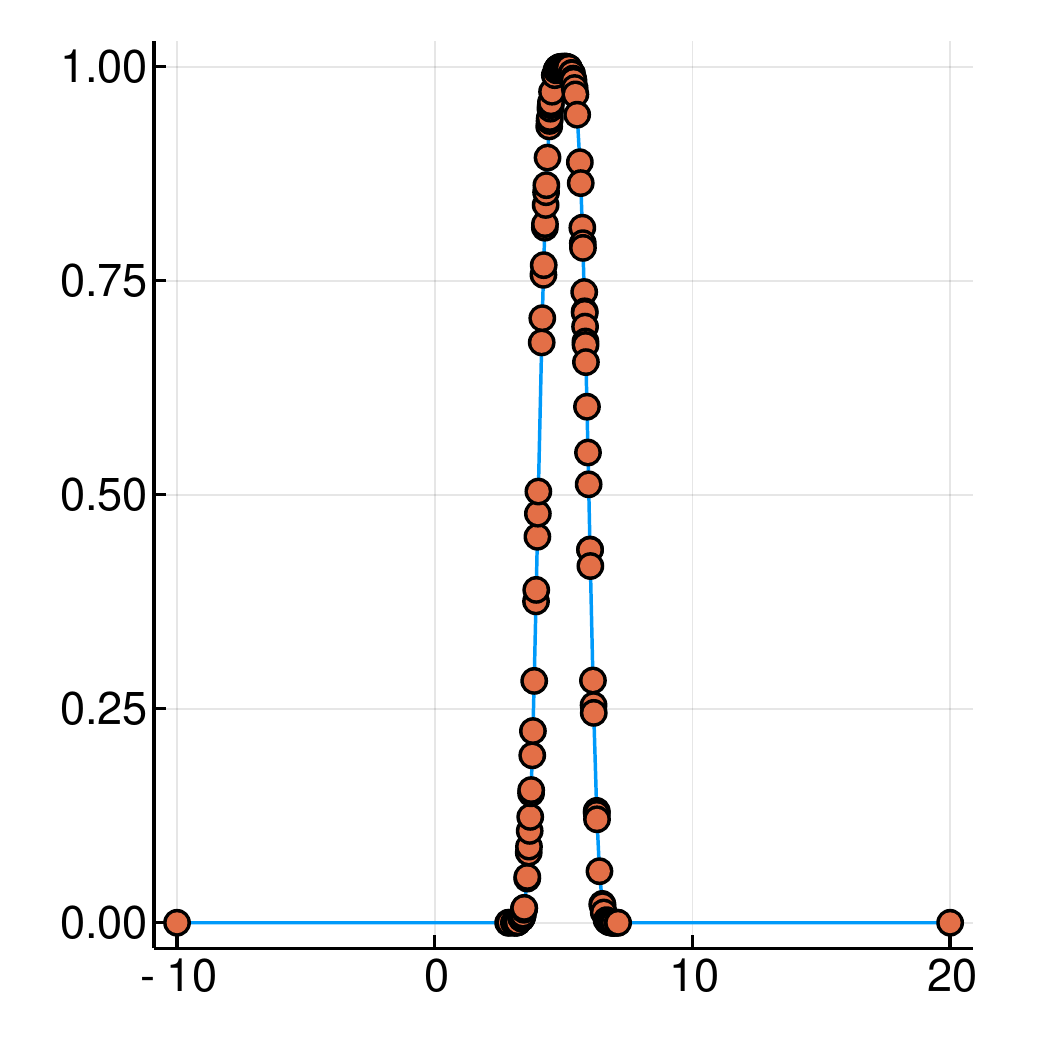}}
\subfloat[CDF,  $i=350$]{\includegraphics[width=0.2\textwidth, keepaspectratio]{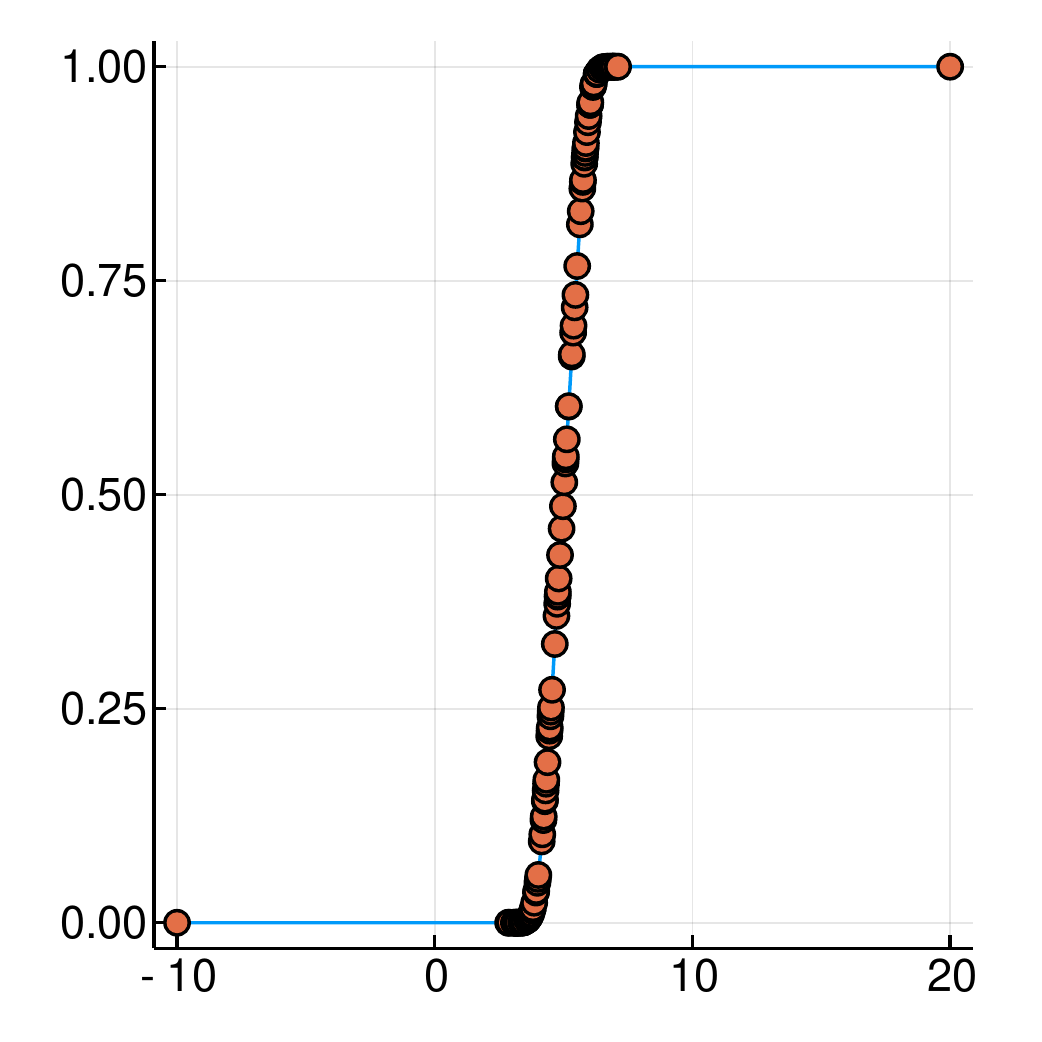}}
\subfloat[PDF,  $i=500$]{\includegraphics[width=0.2\textwidth, keepaspectratio]{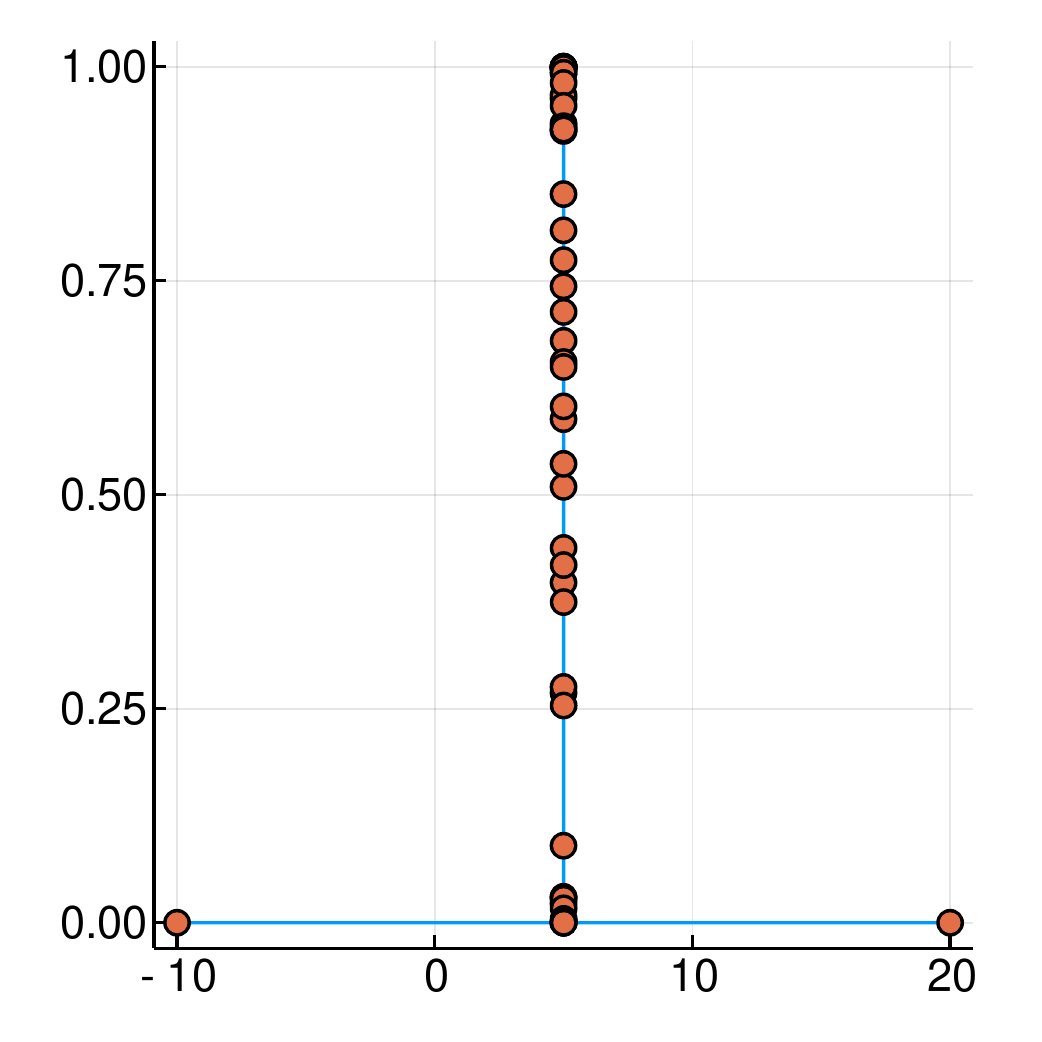}}
\subfloat[CDF,  $i=500$]{\includegraphics[width=0.2\textwidth, keepaspectratio]{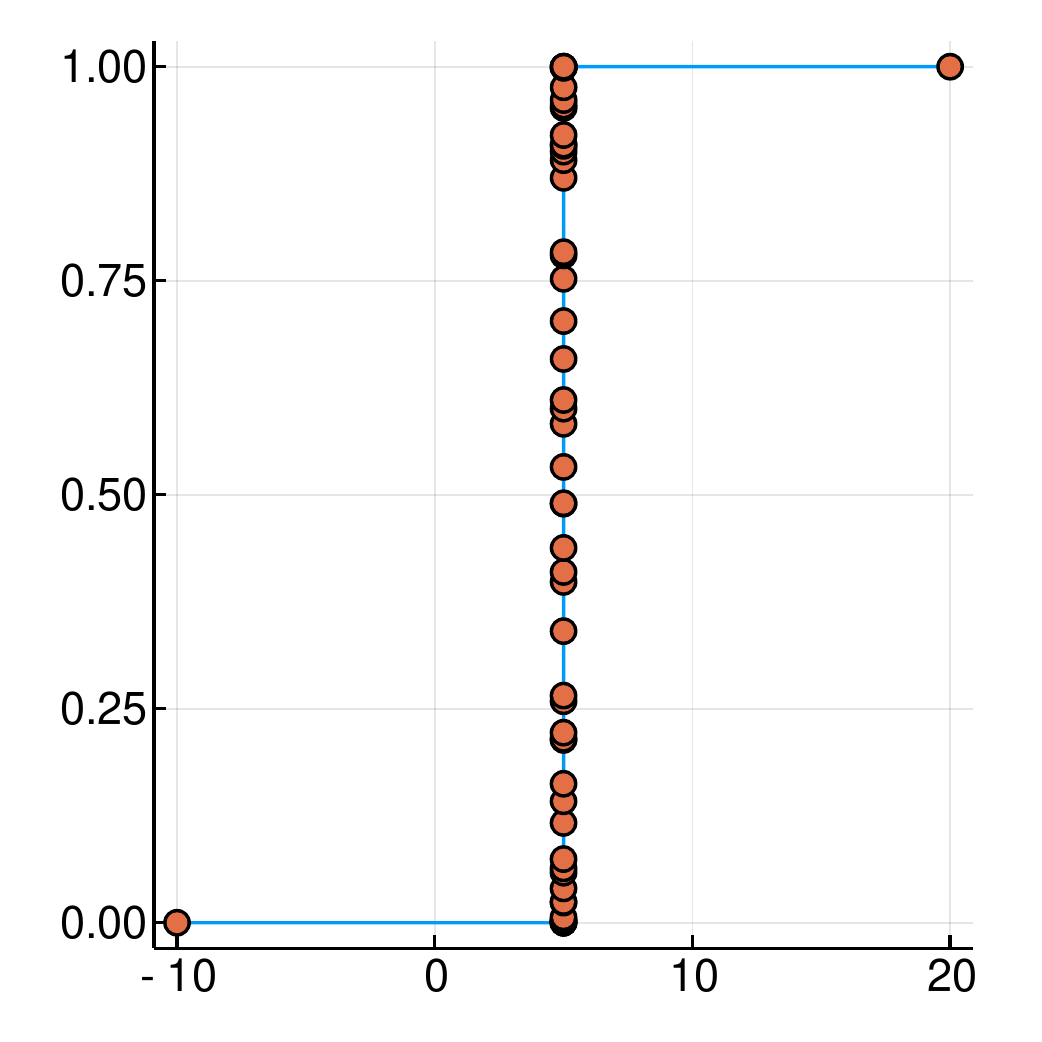}}
\caption[--]{Probability Density and Cumulative Distribution Functions, utilizing ITSO search strategy, for function $f(x)=(x-5)^2$. The shape sequentially tends to the Heaviside function, centered at $x_m=5$}
\label{fig:cdf}
\end{figure}

\begin{figure}[!ht]   
\centering
\subfloat[PDF, $i=3$]{\includegraphics[width=0.2\textwidth, keepaspectratio]{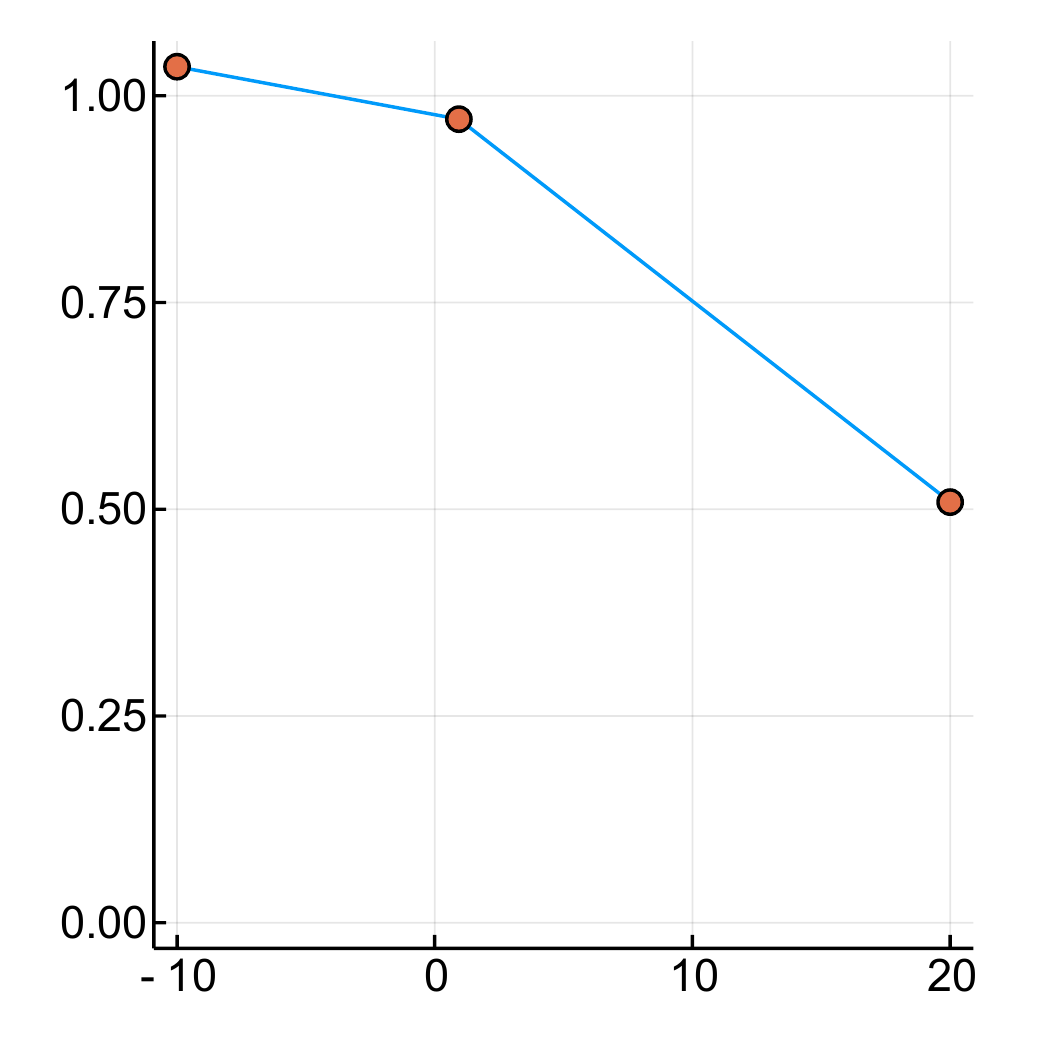}}
\subfloat[CDF, $i=3$]{\includegraphics[width=0.2\textwidth, keepaspectratio]{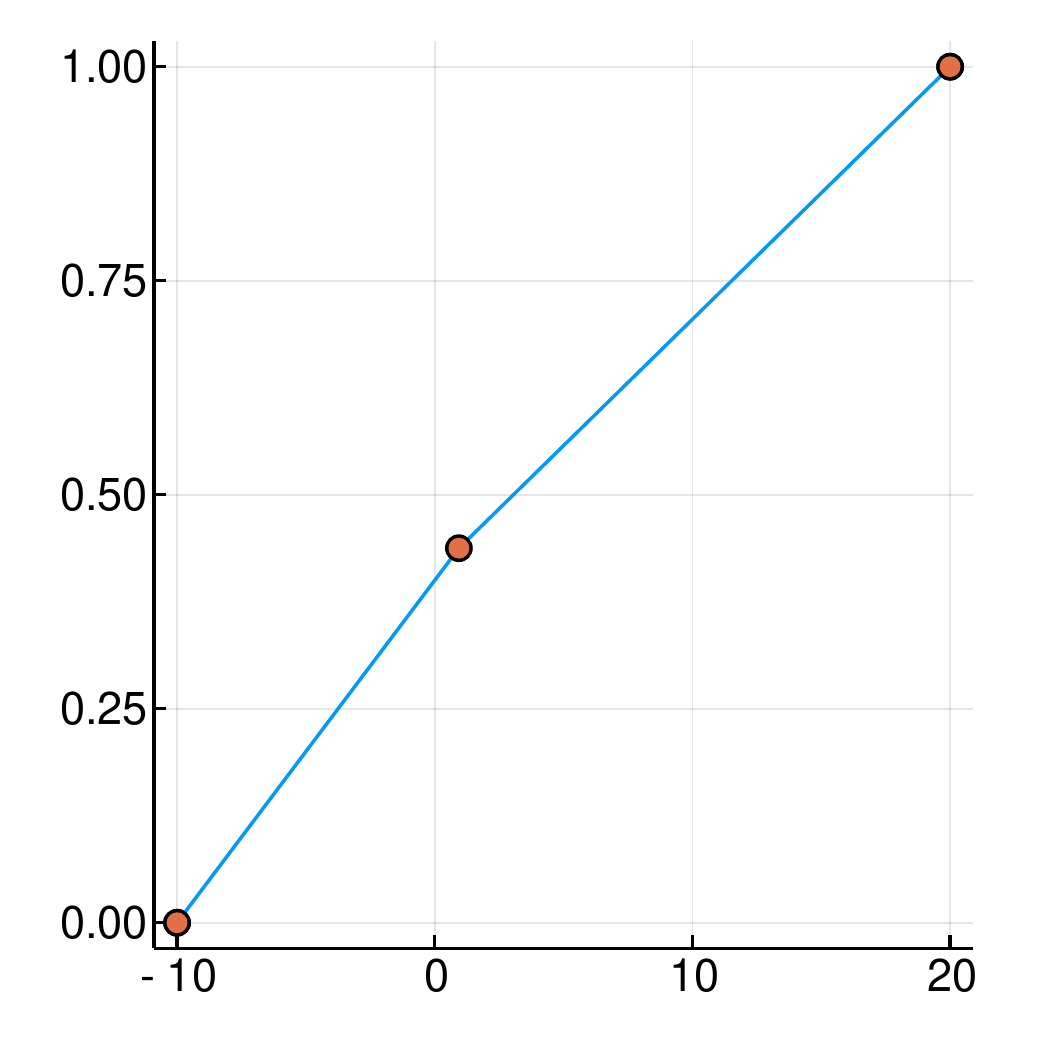}}
\subfloat[PDF, $i=150$]{\includegraphics[width=0.2\textwidth, keepaspectratio]{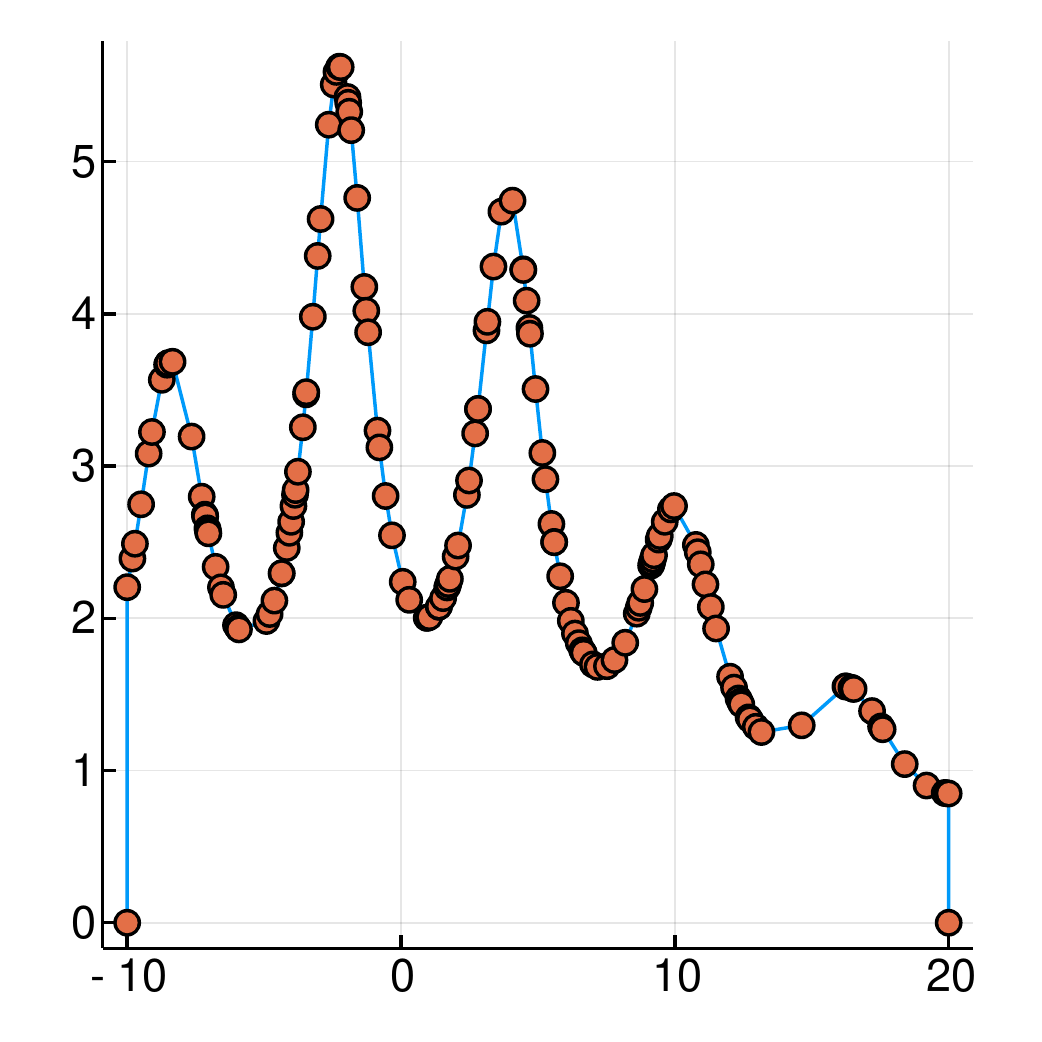}}
\subfloat[CDF, $i=150$]{\includegraphics[width=0.2\textwidth, keepaspectratio]{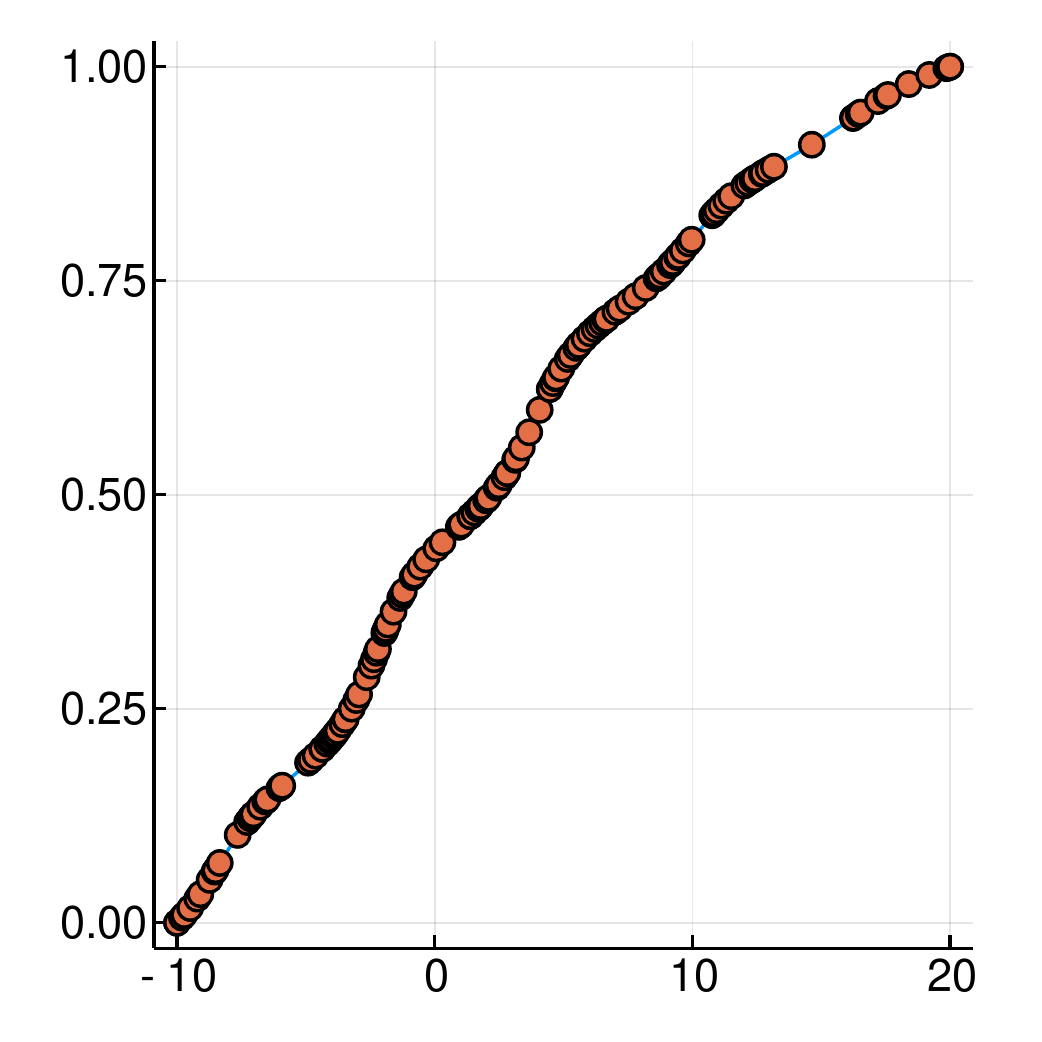}}
\vskip\baselineskip
\subfloat[PDF, $i=250$]{\includegraphics[width=0.2\textwidth, keepaspectratio]{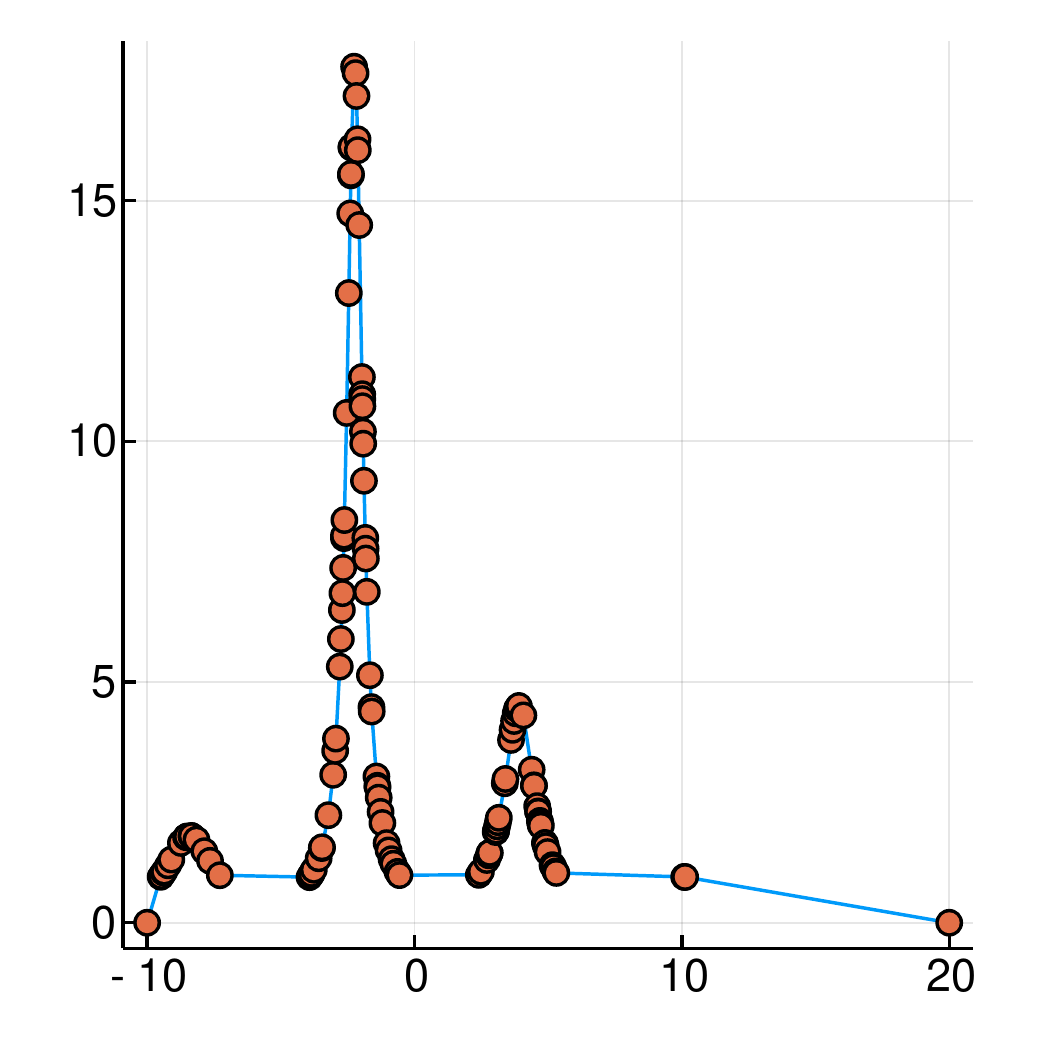}}
\subfloat[CDF, $i=250$]{\includegraphics[width=0.2\textwidth, keepaspectratio]{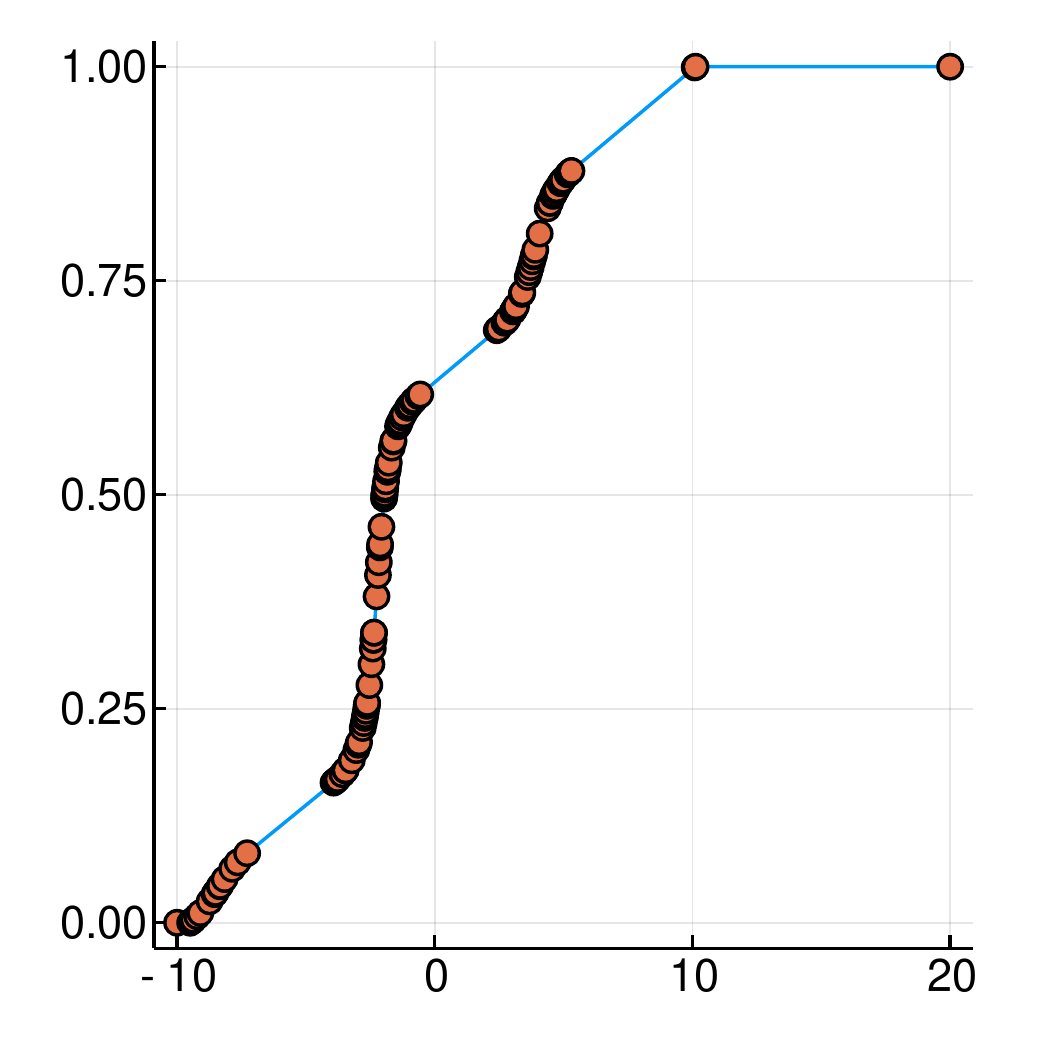}}
\subfloat[PDF, $i=500$]{\includegraphics[width=0.2\textwidth, keepaspectratio]{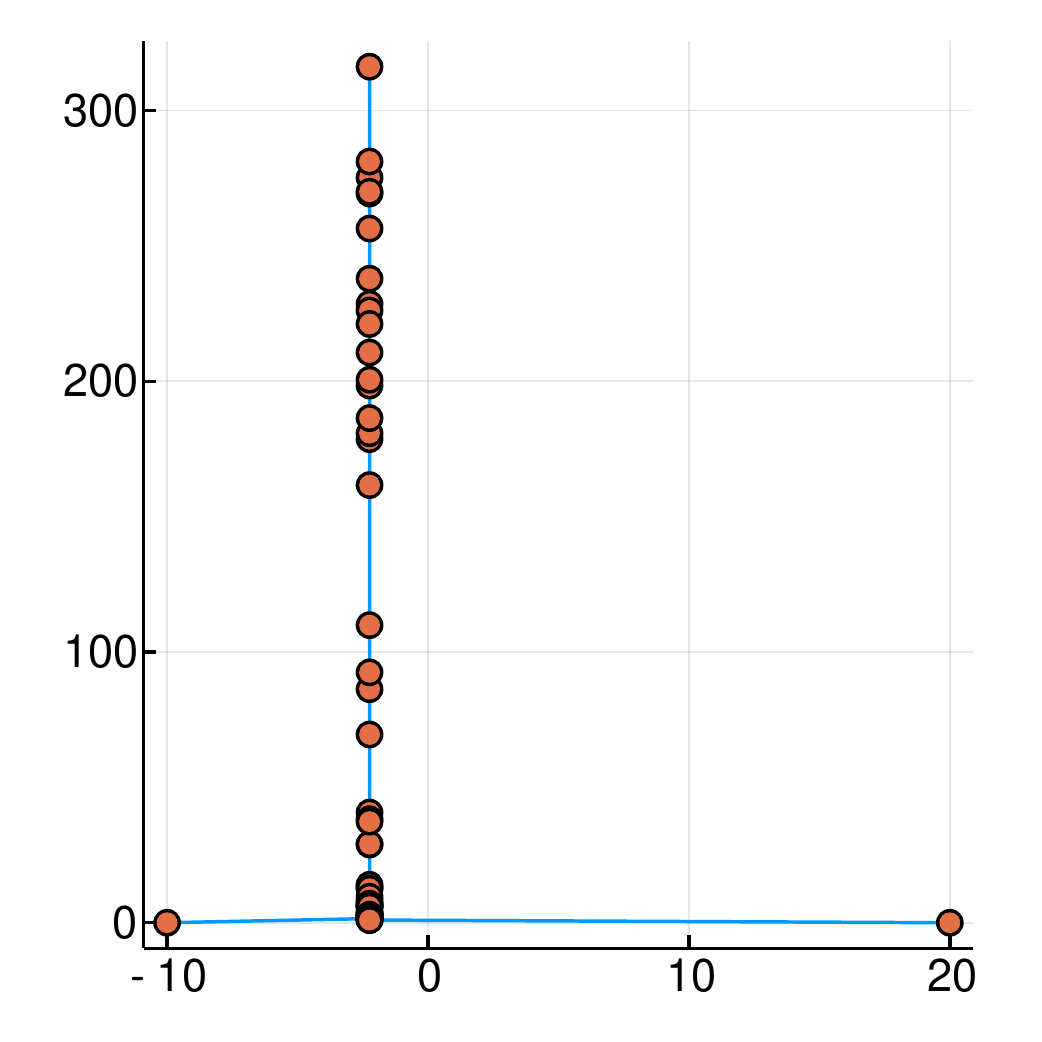}}
\subfloat[CDF, $i=500$]{\includegraphics[width=0.2\textwidth, keepaspectratio]{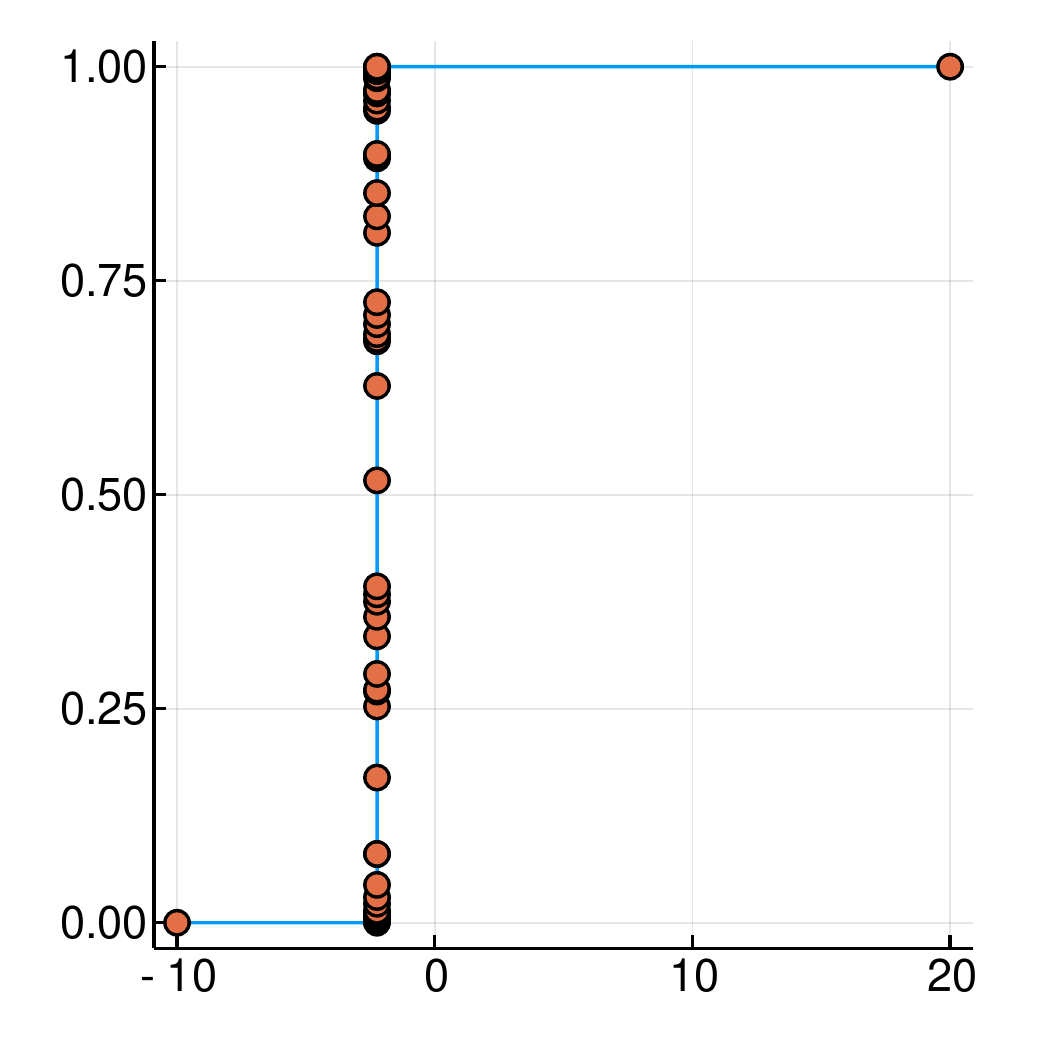}}
\caption[--]{Probability Density and Cumulative Distribution Functions, utilizing ITSO search strategy, for function $f(x)=sin(x+0.7)+0.01*(x+0.7)^2$. The shape sequentially tends to the Heaviside function, centered at $x_m=-2.24$}
\label{fig:pdf-cdf}
\end{figure}


\section{Convergence Properties}
\label{sec:proof}

During the optimization process, the algorithm generates some input variables $x_{ij}$ as arguments for the \textit{black-box} function $f$. By utilizing the values of $f$, we may compute the values of the distribution ${{P}_{{{X}_{j}}}}(x_{ij})$, by Equation \ref{eq:pdf}, and ${{F}_{{{X}_{j}}}}({{x}_{ij}})$ by Equation \ref{eq:cdf}, for all $x_{ij}$.


\begin{definition}
We define the argument of $f$ within $f_e$ iterations, corresponding to the minimum of $f$ as $x_m={\operatorname {arg\,min} }\, f$.
\end{definition}

In iteration $i$, the algorithm will have searched the space $\hat{A}\subset A$, within the limits $lb_j,ub_j$.
By definition, $P$ is the probability density (likelihood) that the optimum occurs in a region $\mathbf{x}\pm \mathbf{dx}$. Hence,
\begin{equation}
E[\mathbf X]=
\begin{Bmatrix}
E[X_1]\\
E[X_2]\\
\hdots \\
E[X_n]
\end{Bmatrix}=
\begin{Bmatrix}
\int_{lb_1}^{ub_1}{\xi}P_{X_1}(\xi)d\xi\\
\int_{lb_2}^{ub_2}{\xi}P_{X_2}(\xi)d\xi\\
\hdots \\
\int_{lb_n}^{ub_n}{\xi}P_{X_n}(\xi)d\xi
\end{Bmatrix},
\label{eq:pdf-all}
\end{equation}

where $E[\cdot]$, denotes the expectation of a random variable or vector.



\begin{theorem}
\label{ITSOth}
If $P_{X_j}$ tends to Dirac $\delta_m$, then ITSO will converge to $x_m$
\end{theorem}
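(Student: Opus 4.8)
The plan is to exploit the fact that the assignment $x_{ij}\leftarrow F_j^{-1}(r_i)$ in Algorithm \ref{al:ITSO}, with $r_i\sim\mathcal{U}(0,1)$, is precisely the inverse transform sampling construction: conditionally on the current CDF $F_{X_j}$ built from Equation \ref{eq:cdf}, the new coordinate value $x_{ij}$ is a random variable with law $P_{X_j}$. Hence the hypothesis $P_{X_j}\to\delta_m$ controls the distribution from which the algorithm draws its iterates, and the whole statement reduces to showing that samples from a sequence of densities collapsing onto a point must themselves collapse onto that point, and that the book-keeping of the best incumbent in Algorithm \ref{al:ITSO} then transfers this collapse to the returned $\mathbf{x}^{*}$.

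First I would make the hypothesis precise as weak convergence $P_{X_j}\Rightarrow\delta_m$, equivalently (since $\delta_m$ carries a single atom) pointwise convergence of the CDF $F_{X_j}(x)\to\mathbf{1}\{x\ge x_m\}$ at every $x\neq x_m$, i.e. the CDF tends to the Heaviside function centered at $x_m$, exactly as illustrated in Figures \ref{fig:cdf}--\ref{fig:pdf-cdf}. From this I would deduce the concentration statement: for every $\epsilon>0$,
\[
\mathbb{P}\bigl(|x_{ij}-x_m|\ge\epsilon\bigr)=1-\bigl(F_{X_j}(x_m+\epsilon)-F_{X_j}(x_m-\epsilon)\bigr)\longrightarrow 0,
\]
because $F_{X_j}(x_m+\epsilon)\to 1$ and $F_{X_j}(x_m-\epsilon)\to 0$; equivalently, the set of quantiles $r\in(0,1)$ whose image $F_j^{-1}(r)$ leaves the $\epsilon$-ball around $x_m$ has Lebesgue measure tending to $0$. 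This gives $x_{ij}\to x_m$ in probability along the subsequence of iterations that update coordinate $j$.

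Next I would promote per-iteration concentration to convergence of the algorithm's output. Since the coordinate $j$ is drawn from $\mathcal{U}\{1,n\}$, each coordinate is selected infinitely often with probability one, so the marginal concentration applies to all $n$ coordinates. Because the success probabilities $p_i:=\mathbb{P}(\|\mathbf{x}_i-x_m\|<\epsilon)$ satisfy $p_i\to 1$, in particular $\sum_i p_i=\infty$, a conditional (Lévy) form of the second Borel--Cantelli lemma yields that almost surely infinitely many iterates fall in the $\epsilon$-ball around $x_m$. Combining this with (i) $f^{*}$ being non-increasing by construction and bounded below by $f(x_m)$, and (ii) continuity of $f$ at $x_m$, a short argument by contradiction shows $f^{*}\downarrow f(x_m)$; then uniqueness of the minimizer (or, more generally, that $f$ separates $x_m$ from the complement of any $\epsilon$-ball) forces the returned argument $\mathbf{x}^{*}\to x_m$, which is the assertion of the theorem.

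The main obstacle I anticipate is that the densities $P_{X_j}$ are not fixed but are themselves random objects, recomputed at every step from the past iterates and the observed values of the black-box $f$; thus ``$P_{X_j}\to\delta_m$'' must be read as convergence of a sequence of random measures, and the Borel--Cantelli step must be carried out with the conditional version adapted to the natural filtration generated by $\{r_i\}$ and $\{f(\mathbf{x}_i)\}$ rather than for independent events. A secondary technical point is reconciling the definition of $x_m$ as the argmin ``within $f_e$ iterations'' with the limit $i\to\infty$ in Equation \ref{eq:dirac}: one must either take $f_e\to\infty$ or argue that, once the CDF is sufficiently close to the Heaviside at the true global minimizer, finitely many further evaluations suffice to bring $\mathbf{x}^{*}$ within any prescribed tolerance. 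Everything else --- the inverse-transform identity and the contradiction argument for $f^{*}$ --- is routine.
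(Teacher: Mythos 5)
Your plan is sound, but it takes a genuinely different route from the paper's. The paper's proof is a single computation about the sampling distribution: it writes $E[X_j]=\int_{lb_j}^{ub_j}\xi F'_{X_j}(\xi)\,d\xi$, integrates by parts, invokes the antiderivative-of-inverse-functions identity to obtain $E[X_j]=\int_0^1 F_{X_j}^{-1}(y)\,dy$ (the inverse-transform-sampling mean), and then lets $F_{X_j}$ tend to the Heaviside function $H_m$ to conclude $E[X_j]\to ub_j-(ub_j-x_m)=x_m$. Nothing is said there about the realized iterates, the filtration, or the incumbent $\mathbf{x}^*$; strictly speaking the paper establishes only that the \emph{mean} of the proposal distribution tends to $x_m$, which is weaker than the assertion that ITSO itself converges. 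You instead read the hypothesis as weak convergence to a point mass, convert it into convergence in probability of the sampled coordinates via the Heaviside limit of the CDF, and then use a conditional Borel--Cantelli argument together with the monotonicity of $f^*$ and continuity of $f$ at $x_m$ to control the returned argument itself. Your route is heavier, but it proves the statement actually asserted (convergence of the algorithm's output rather than of an expectation), and you correctly flag the two points the paper glosses over: that $P_{X_j}$ is a random, data-dependent measure, so the limit must be handled with a conditional Borel--Cantelli adapted to the natural filtration, and that $x_m$ is defined relative to finitely many evaluations while the limit in Equation \ref{eq:dirac} is taken as $i\to\infty$. What the paper's approach buys is brevity and a clean link to the quantile identity $E[X]=\int_0^1 F^{-1}(y)\,dy$; what yours buys is an actual convergence statement for $\mathbf{x}^*$, at the cost of needing the concentration and continuity hypotheses to be made explicit.
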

\begin{proof}
For each dimension $j$, we may write
\begin{equation}
E[X_j]=\int_{lb_j}^{ub_j}{\xi}P_{X_j}(\xi)d\xi=\int_{lb_j}^{ub_j}{\xi}{F'_{X_j}}(\xi)d\xi,
\end{equation}
and integrating by parts, we obtain
\begin{equation}
\begin{split}
    E[X_j]=[\xi F_{X_j}(\xi)]_{lb_j}^{ub_j}-\int_{lb_j}^{ub_j}{1}F_{X_j}(\xi)d\xi=ub_j*1-lb_j*0
    \\
    -\int_{lb_j}^{ub_j}{1}F_{X_j}(\xi)d\xi.
    \label{eq:expect}
\end{split}
\end{equation}
If we apply the theorem of the antiderivative of inverse functions \cite{Parker1955}, we obtain
\begin{equation}
    \int_{0}^{1}{{{F_{X_j}}^{-1}}}(y)dy+\int_{lb_j}^{ub_j}{F_{X_j}}(x)dx=ub_j*1-lb_j*0.
    \label{eq:antider}
\end{equation}

Hence, for Equations \ref{eq:expect} and \ref{eq:antider} we deduce that 
\begin{equation}
    E[X_j]=\int_{0}^{1}{{{F_{X_j}}^{-1}}}(y)dy.
\label{eq:f-1}
\end{equation}

With subscript $m$ denoting that the Dirac function is centered at the argument that minimizes $f$, i.e.
\begin{equation}
{{\delta }_{m}}(x)=\left\{ \begin{array}{*{35}{l}}
   +\infty , & x={\operatorname {arg\,min} }\,f  \\
   0, & x\ne {\operatorname {arg\,min} }\,f  \\
\end{array} \right. ,
\end{equation}
and
\begin{equation}
H_m(x):=\int_{-\infty }^{x}{\delta_m (s)}ds.
\end{equation}

As $P_{X_j}$ tends to Dirac function, $F_{X_j}$ tends to the Heaviside step function centered at $x_m$, hence by Equation \ref{eq:f-1} we deduce that
\begin{equation}
\label{eq:ex}
    E[X_j] \xrightarrow[]{i\to f_e} \int_{0}^{1}{{{H_m}^{-1}}}(y)dy,
\end{equation}

and by Equation \ref{eq:antider} 
\begin{equation}
    \label{eq:xm}
    E[X_j] \xrightarrow{} ub_j - \int_{0}^{1}{{H_m}}(y)dy = ub_j - (ub_j-x_m)*1 = x_m
\end{equation}
\end{proof}

\begin{lemma}
\label{lemma-conv}
\emph{(ITSO Convergence Speed)}
ITSO is the \underline{fastest possible} optimization framework
\end{lemma}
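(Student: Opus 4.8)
The plan is to read ``fastest possible'' in the only sense meaningful for a \textit{black-box} method: at every iteration $i$ the algorithm has access to nothing beyond the observed pairs $\{(\mathbf{x}_1,f_1),\dots,(\mathbf{x}_i,f_i)\}$, and ITSO turns \emph{all} of that information into its next query, so no competing algorithm can be better informed. Concretely, I would first fix a measure of convergence speed --- say the decay in $i$ of $\mathbb{P}(\|\mathbf{x}_i - x_m\| > \varepsilon)$, or equivalently the rate at which the mass that $P_{X_j}$ assigns to a neighbourhood of $x_m$ approaches $1$ --- and note that, by Theorem \ref{ITSOth} together with Equation \ref{eq:dirac}, this rate is governed entirely by how quickly $P_{X_j}$ approaches $\delta_m$.

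The core of the argument is a matching lower bound. Any optimization procedure, deterministic or randomized, selects its next argument as a (possibly randomized) map of the data seen so far; call the induced law $Q_i$. Under the foundational assumption of Section \ref{sec:iit} --- that the likelihood of $x_m$ lying at a point is a monotone decreasing kernel of the $f$-value there --- the data determine, up to the duplicate-removal and noise-robustness caveats already discussed, a unique posterior density, and the kernel construction of Equation \ref{eq:pdf} is exactly a consistent numerical estimator of it. Hence $P_{X_j}$ is the sharpest possible description of where $x_m$ can be given the evidence: if $Q_i \neq P_{X_j}$ then $Q_i$ necessarily places mass where the posterior is smaller, i.e., it spends an evaluation on a demonstrably less promising region, and a coupling/minimax argument over the admissible function class then shows that this mismatch can only slow the concentration around $x_m$. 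I expect this to be the main obstacle, because it requires making precise (i) that the kernel-induced density dominates every competing query law in a suitable stochastic-concentration ordering, and (ii) that a one-step greedy optimality of this kind propagates to optimality of the whole trajectory rather than being defeated by some far-sighted alternative.

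It then remains to show that ITSO attains the bound, which is immediate from the construction: in Algorithm \ref{al:ITSO} the next argument is drawn as $x_{ij}\leftarrow F_j^{-1}(r_i)$ with $r_i\sim\mathcal{U}(0,1)$, which is precisely inverse transform sampling from $P_{X_j}$, so at every step $Q_i = P_{X_j}$ and ITSO's speed matches the lower bound iteration for iteration. Finally, the ``incidentally'' clause is handled by observing that any convergent algorithm must have the empirical law of its accepted iterates tend to $\delta_m$; interpreting those accepted points as draws from an implicit sequence of densities, convergence forces that sequence to agree asymptotically with the $P_{X_j}$ of Equation \ref{eq:pdf}, so the algorithm is a --- possibly wasteful --- realization of the ITSO framework and cannot outrun the direct implementation. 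Combining the lower bound with this matching upper bound yields the claim.
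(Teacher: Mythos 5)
Your plan and the paper's proof share almost nothing beyond the statement. The paper does not attempt a minimax lower bound on a convergence rate at all: it writes any sampling law that fails to tend to $\delta_m$ as ${P}'=P^{*}\pm\delta_m$ (Equation \ref{eq:p*}), pushes this through the expectation identity of Equation \ref{eq:f-1} to obtain $E[X]\to x_m\pm\epsilon$, and concludes that such an algorithm converges to a point other than $x_m$; the phrase ``fastest possible'' is then read off from the claim that only schemes whose query law tends to $\delta_m$ converge at all. Whatever one thinks of that argument, it is a statement about the limit point rather than about speed, and it is far more elementary than what you propose.

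The gap in your proposal is that the two steps you yourself flag as ``the main obstacle'' are exactly the content of the lemma, and neither can be discharged as stated. Step (i) --- that the kernel-induced density of Equation \ref{eq:pdf} is the sharpest possible description of where $x_m$ can be --- presupposes a prior and a function class under which the kernel construction is the Bayes posterior; the paper supplies neither, and without them no single query law stochastically dominates all others (this is the content of no-free-lunch results for black-box search: for every query law there is a function class on which some other law concentrates faster). Step (ii) --- that one-step greedy optimality of the query law propagates to optimality of the whole trajectory --- is false in general for sequential design: myopic policies that always sample the currently most promising region can be strictly outperformed by far-sighted policies that spend early evaluations on exploration, which is precisely why non-myopic acquisition strategies exist in Bayesian optimization. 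Your upper bound (Algorithm \ref{al:ITSO} draws $x_{ij}\leftarrow F_j^{-1}(r_i)$, hence attains $Q_i=P_{X_j}$) is fine, but the matching lower bound is asserted rather than proved, and it cannot be proved without assumptions far stronger than anything in Section \ref{sec:iit}.
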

\begin{proof}
Any distribution that doesn't tend to Dirac, could be considered as a Dirac plus a positive function of $x$. In this case, the algorithmic framework would search through a strategy that produces a ${P}'$ over $\mathbf{x}$, as 
\begin{equation}
    \label{eq:p*}
    {P}'={{P}^{*}} \pm {{\delta }_{m}}.
\end{equation}

Hence, with $F^*$ indicating the CDF corresponding to $P^*$, Equations \ref{eq:ex}, and \ref{eq:f-1}, result in

\begin{equation}
E[X] \xrightarrow[]{i\to f_e} \int_{0}^{1}{{{H_m}^{-1}}}(y)dy + \int_{lb}^{ub}{F^*(x)dx},
\end{equation}. 

and thus, by Equations \ref{eq:p*}, and \ref{eq:xm}, we obtain
\begin{equation}
E[X] \xrightarrow{} x_m \pm\epsilon.
\end{equation}

Hence the algorithm would converge to a point different than $x_m$



\end{proof}

\section{Programming techniques}
\label{sec:prog}

A variety of programming techniques were investigated, in order to implement the proposed method into a computer code. 
To keep the algorithm simple and reduce the computational time, we applied inverse transform sampling by keeping in each iteration $i$ the best function evaluations, and randomly sampling among them. This is equivalent to a kernel function that vanishes over the worst function evaluations, and distributing the probability mass to the best performing ones. Accordingly, similar programming techniques may be investigated in future works, within the suggested framework.

\begin{algorithm}[!ht]
\SetAlgoLined
\textbf{Initialize}: $\mathbf{x}=\text{rand}(n), \quad \mathbf{opti\_x}=\mathbf{x}, \quad {opti\_f}=f(\mathbf{\mathbf{opti\_x}})$\;
\For{$i=1:f_e$}{
    \For{$j=1:n$}{
        $\mathbf{indsBEST}=\text{sortperm}({opti\_evals})[1:\alpha]$\;
        $\mathbf{inp\_x=x\_evals}[\mathbf{indsBEST},j]$\; $\mathbf{inp\_y=opti\_evals}[\mathbf{indsBEST}]$\;
        ${rr}= \text{min}\{\mathbf{inp\_x\}}$ \\
        $\quad + \text{rand}(0,1)(\text{max}\{\mathbf{inp\_x\}} - \text{min}\{\mathbf{inp\_x\}})$\;
        $\mathbf{x}[j]={rr}$\;
        $f_i=f(\mathbf{x})$\;
        \eIf{${{f}_{i}}\le$ \text{opti\_f}}{
        ${opti\_f} = {{f}_{i}} $\;
        $\mathbf{opti\_x} = \mathbf{x}$\;
        }{$\mathbf{x}=\mathbf{opti\_x}$\;}
    }
}
\Return $\quad {opti\_f}, \quad \mathbf{opti\_x}$
\caption{ITSO-Short Pseudocode}
\label{ITSO-short}
\end{algorithm}

The Algorithm \ref{ITSO-short} represents a simple version of the supplementary code in the appendix, which may easily be programmed. The variable $\mathbf{opti\_evals}$ is a dynamic vector, containing all values of the objective function returned during the optimization history, until step $i$, and $\mathbf{x\_evals}$ is an $i \times n$ matrix, containing all the design vectors $\mathbf{x}_{1:i}$, corresponding to $\mathbf{opti\_evals}$. The integer $\alpha$ is a parameter regarding how many instances of the optimization history are kept in order to randomly sample among them; for example if $f_e=10^4$, we may select $\alpha=10^2$. In this variation of the code, the Inverse Transform Sampling is approximately implemented in line 7, by calculating the random variable $rr$, among the extrema of the vector $\mathbf{inp\_x}$, corresponding to the range where: for the $j^{th}$ dimension of all $\mathbf{x}_{1:i}$, the $\alpha$ best function values were returned by the \textit{black-box} function $f$. The sought solution is the vector $\mathbf{opti\_x}$, and the mimimum attained value of the objective function $opti\_f$. 

\section{Numerical Experiments}
\label{sec:bbf}
In this section, we present the results obtained by running the ITSO algorithm, as well as Adaptive Differential Evolution (rand 1 bin), Differential Evolution (rand 1 bin), and   Differential Evolution (rand 2 bin) with and without radius limited, Compass Coordinate Direct Search, Probabilistic Descent Direct search, Random Search, Resampling Inheritance Memetic Search, Resampling Memetic Search, Separable Natural Evolution Strategies, Simultaneous Perturbation Stochastic Approximation, and Exponential Natural Evolution Strategies from the Julia Package BlackBoxOptim.jl \cite{BlackBoxOptim}, and Nelder-Mead, Particle Swarm, and Simulated Annealing from Optim.jl \cite{Optim}. To demonstrate the performance of each optimizer in attaining the minimum, we firstly run the algorithm $r=10$ times, obtain $f_k(\mathbf{x}_i)$ for $k=\{1,2,\dots,r\}$ and all iterations $i=\left\{ 1,2,\ldots ,{{f}_{e}} \right\}$, and average the results 
\begin{equation}
    \label{eq:aner}
    \hat{f}(\mathbf{x}_i)=\frac{\sum_{k=1}^{r} f_k(\mathbf{x}_i)}{r}.
\end{equation}
Then, we normalize the vector of obtained function evaluations $\mathbf{v}=\{\hat{f}(\mathbf{x}_1),\hat{f}(\mathbf{x}_1),\dots,\hat{f}(\mathbf{x}_{f_e})\}$ in the domain $\left[0,1\right]$ through
\begin{equation}
    \label{eq:norm}
    \hat{f}_n(\mathbf{x}_i)=\frac{\hat{f}(\mathbf{x}_i)-\operatorname {min} \mathbf{v}}{\operatorname {max} \mathbf{v} -\operatorname {min} \mathbf{v}},
\end{equation}
and finally use the optimization history $h$ 
\begin{equation}
\label{eq:h}
    h(\mathbf{x}_i)=\left(\frac{\sum\limits_{l=1}^{m}{\hat{f}_n^l(\mathbf{x}_i)}}{m}\right)^{\frac{1}{10}},
\end{equation}
where $m$ indicates the number of \textit{black-box} functions used for the evaluation. Equation \ref{eq:h} was selected as a performance metric, in order to obtain a clear representation of the various optimizers utilized, as powers smaller than one (in our case $\frac{1}{10}$), have the property to magnify the attained values at the final steps of the optimization history. In Figure \ref{fig:run_tests} the numerical experiments for $m=13$ functions are presented. Each line corresponds to the normalized average optimization history $h$ (Equation \ref{eq:h}, for all functions, which were repeated 10 times). We may see a clear prevalence of the proposed framework, in terms of convergence performance, as expected by the theoretical investigation. The numerical experiments for the comparison with other optimizers, may be reproduced by running the file \textit{\_\_run.jl}.

\begin{figure*}[!ht]   
\centering
\subfloat[10 variables and 5000 evaluations]{\includegraphics[width=0.5\textwidth, keepaspectratio]{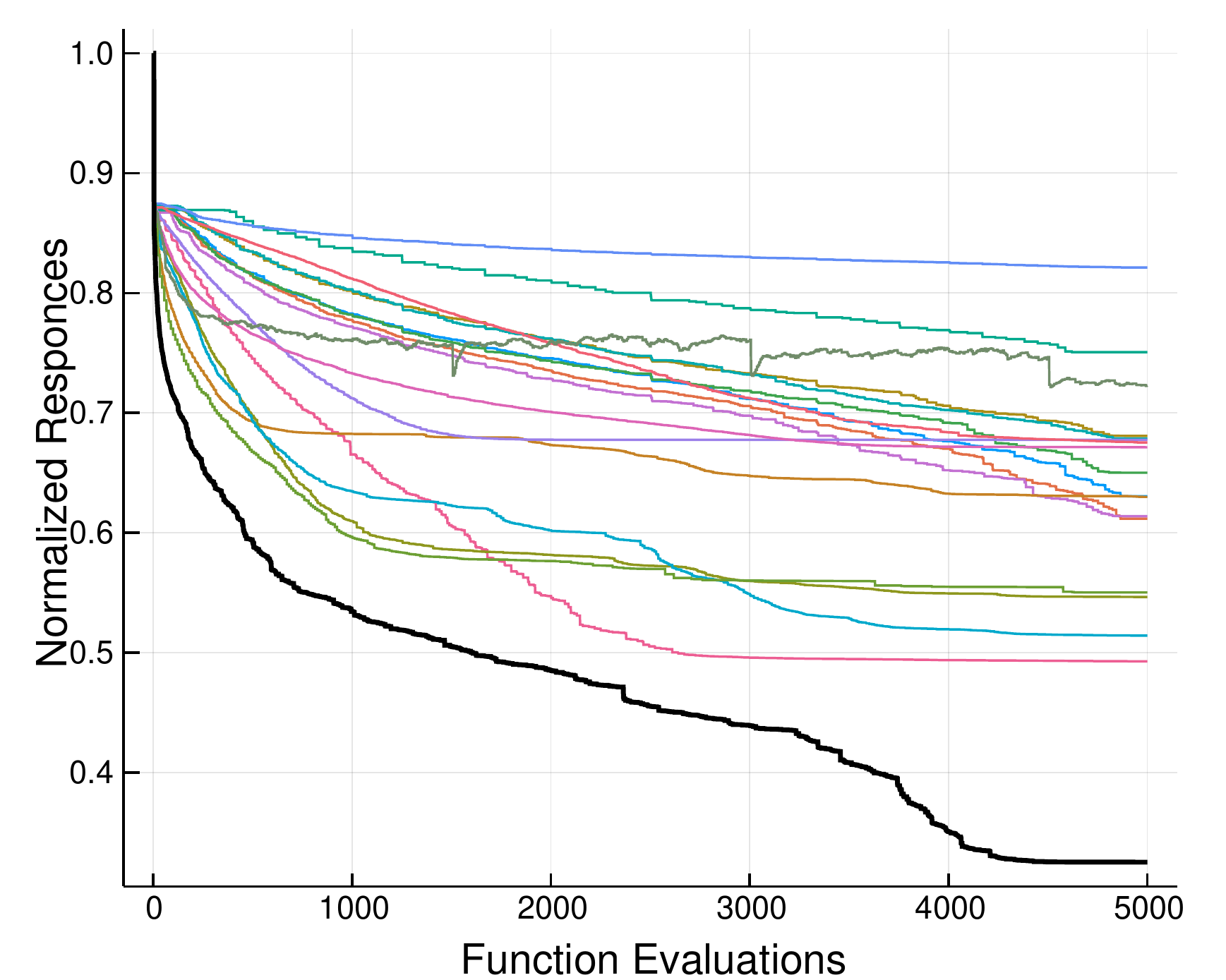}\label{fig:all_10times_10vars_5000evals}}
\subfloat[20 variables and 5000 evaluations]{\includegraphics[width=0.5\textwidth, keepaspectratio]{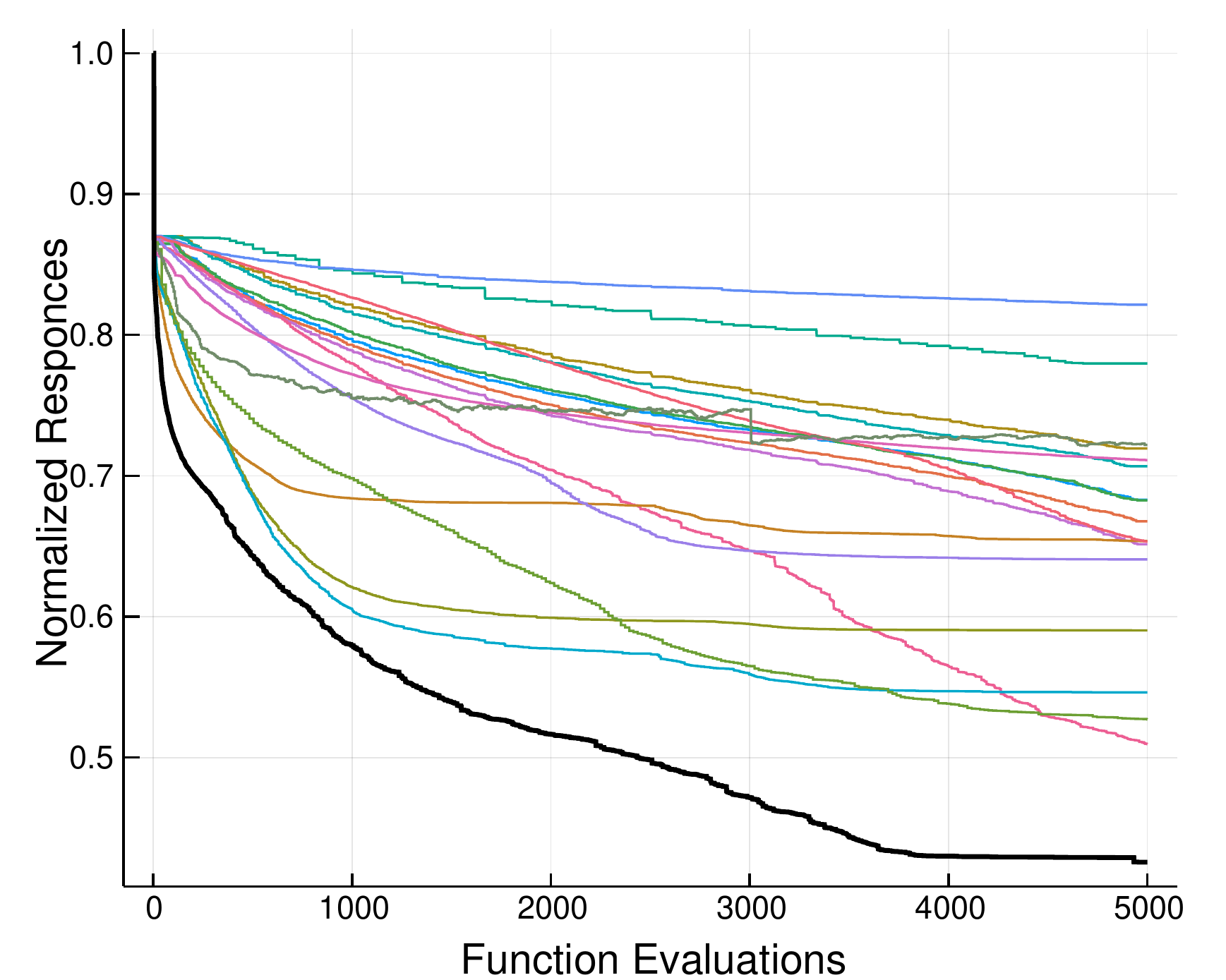}\label{fig:all_10times_20vars_5000evals}}
\vskip\baselineskip
\subfloat[20 variables and 10000 evaluations]{\includegraphics[width=0.5\textwidth, keepaspectratio]{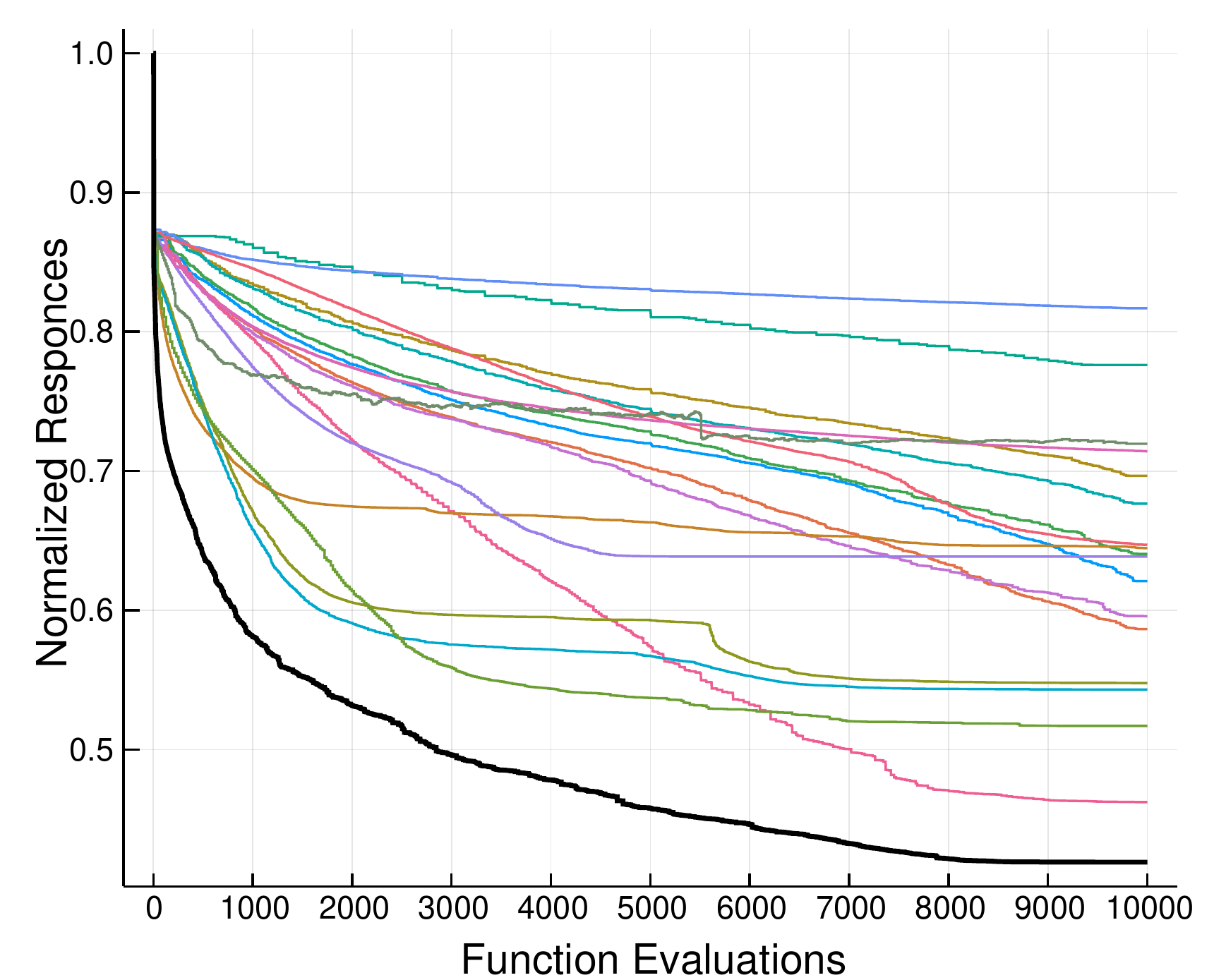}\label{fig:all_10times_20vars_10000evals}}
\subfloat[Utilized Optimizers]{\includegraphics[width=0.5\textwidth, keepaspectratio]{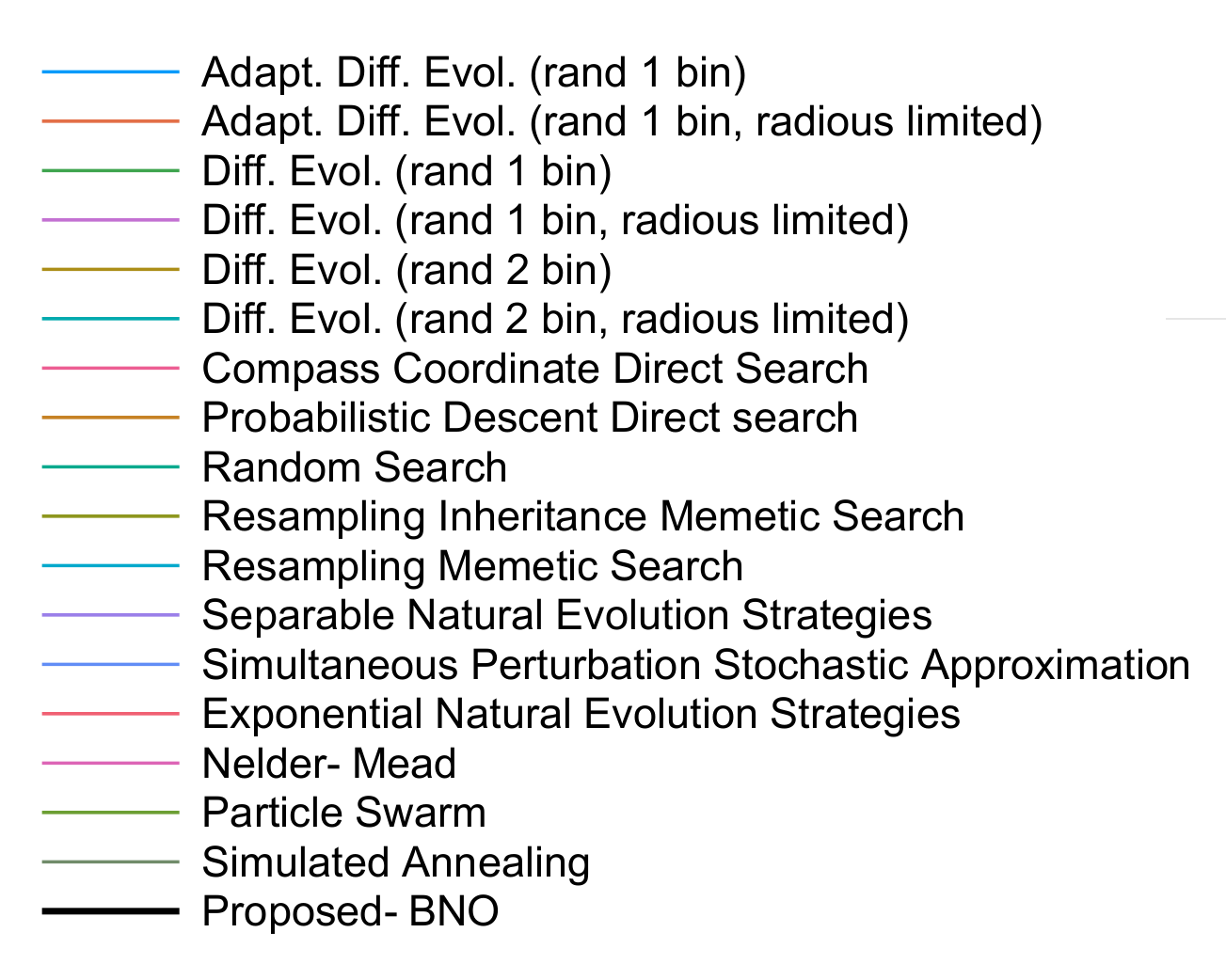}\label{fig:all_10times_50vars_1000evals}}
\caption[--]{Average Optimization history $h$ (Equation \ref{eq:h}, for all 13 Functions, repeated 10 times.}
\label{fig:run_tests}
\end{figure*}

\begin{table*}[!t]
\begin{center}
\captionsetup{justification=centering}
\caption {Average Minimum Values of $h$ Attained by each Optimizer} 
\begin{tabular}{@{}llll@{}}
\toprule
& $n=10$      & $n=20$      & $n=20$      \\ 
& $f_e=5000$ & $f_e=5000$ & $f_e=10000$ \\ \cmidrule{1-4}\morecmidrules\cmidrule{1-4}
\begin{tabular}[c]{@{}l@{}}Adapt. Diff. Evol. (rand 1 bin)\end{tabular}                    & 0.00991   & 0.02210   & 0.00852   \\ \midrule
\begin{tabular}[c]{@{}l@{}}Adapt. Diff. Evol. (rand 1 bin, radious limited)\end{tabular}   & 0.00734   & 0.01762   & 0.00482   \\ \midrule
\begin{tabular}[c]{@{}l@{}}Diff. Evol. (rand 1 bin)\end{tabular}                           & 0.01348   & 0.02196   & 0.01161   \\ \midrule
\begin{tabular}[c]{@{}l@{}}Diff. Evol. (rand 1 bin, radious limited)\end{tabular}          & 0.00760   & 0.01377   & 0.00564   \\ \midrule
\begin{tabular}[c]{@{}l@{}}Diff. Evol. (rand 2 bin)\end{tabular}                           & 0.02140   & 0.03712   & 0.02687   \\ \midrule
\begin{tabular}[c]{@{}l@{}}Diff. Evol. (rand 2 bin, radious limited)\end{tabular}          & 0.02072   & 0.03110   & 0.02008   \\ \midrule
\begin{tabular}[c]{@{}l@{}}Compass Coordinate Direct Search\end{tabular}                   & 0.00084   & 0.00118   & 0.00045   \\ \midrule
\begin{tabular}[c]{@{}l@{}}Probabilistic Descent Direct search\end{tabular}                & 0.00984   & 0.01424   & 0.01239   \\ \midrule
Random Search & 0.05678   & 0.08307   & 0.07936   \\ \midrule
\begin{tabular}[c]{@{}l@{}}Resampling Inheritance Memetic Search\end{tabular}              & 0.00238   & 0.00513   & 0.00243   \\ \midrule
\begin{tabular}[c]{@{}l@{}}Resampling Memetic Search\end{tabular}                          & 0.00129   & 0.00236   & 0.00223   \\ \midrule
\begin{tabular}[c]{@{}l@{}}Separable Natural Evolution Strategies\end{tabular}             & 0.02038   & 0.01165   & 0.01127   \\ \midrule
\begin{tabular}[c]{@{}l@{}}Simultaneous Perturbation Stochastic Approximation\end{tabular} & 0.13967   & 0.13998   & 0.13227   \\ \midrule
\begin{tabular}[c]{@{}l@{}}Exponential Natural Evolution Strategies\end{tabular}           & 0.01972   & 0.01415   & 0.01286   \\ \midrule
Nelder-Mead & 0.01858   & 0.03307   & 0.03452   \\ \midrule
Particle Swarm & 0.00254   & 0.00166   & 0.00136   \\ \midrule
\begin{tabular}[c]{@{}l@{}}Simulated Annealing\end{tabular}  & 0.03825   & 0.03812   & 0.03721   \\ \midrule
Proposed-ITSO  & 0.00001   & 0.00020   & 0.00017   \\ \bottomrule
\end{tabular}
\label{tab:all-opti}
\end{center}
\end{table*}



\nomenclature{$n$}{number  of  dimensions  of  the  set $A$}
\nomenclature{$A$}{search space $\in {{\mathbb{R}}^{n}}$}
\nomenclature{${{f}_{e}}$}{the maximum function evaluations}
\nomenclature{$i=\left\{ 1,2,\ldots ,{{f}_{e}} \right\}$}{iterator for the $f_e$}
\nomenclature{$j=\left\{ 1,2,\ldots ,n \right\}$}{iterator for the dimensions of $A$}
\nomenclature{$\mathbf x^*$}{the argument of $f$ corresponding to the current optimum during the optimization process}
\nomenclature{$\mathbf x_m$}{the sought ${\operatorname {arg\,min} }\,f$}
\nomenclature{$h$}{convergence history of optimization algorithms}
\printnomenclature[1cm]

\section{Discussion and Conclusions}
\label{sec:conclusions}

In this work a novel approach was presented for the well known problem of finding the argument that minimizes a \textit{black-box}, function or system. A vast volume of approximation algorithms have been proposed, mainly heuristic, such as genetic, evolutionary, particle swarm, as well as their variations. However, they stem from \textit{nature-inspired} procedures, and hence their converge is investigated a-posteriori. Despite their efficiency, they are often deprecated by researchers, due to the lack of rigorous mathematical formulation, as well as complexity of implementation. To the contrary, the proposed algorithm, initiates its formulation from well established probabilistic definitions and theorems, and its implementation demands a few lines of computer code. Furthermore, the convergence properties were found stable, as a proof that the suggested framework attains the best possible solution in the fewest possible iterations. The numerical examples validate the theoretical results and may be reproduced by the provided computer code. We consider the suggested method as a powerful framework which may easily be adopted to the sought solution of any problem involving the minimization of a \textit{black-box} function.

\begin{appendices}
\section{Programming Code} 
The corresponding computer code is available on GitHub \url{https://github.com/nbakas/ITSO.jl}. The examples of Figure \ref{fig:run_tests} may be reproduced by running \textit{\_\_run.jl}. The sort version of the Algorithm \ref{al:ITSO} is available in Julia \cite{bezanson2017julia} (file \textit{ITSO-short.jl}), Octave \cite{Octave} (\textit{ITSOshort.m}), and Python \cite{python} (\textit{ITSO-short.py}). The implementation of the framework is integrated in a few lines of computer code, which can be easily adapted for case specific applications with high efficiency.

\section{\textit{Black-Box} Functions}
The following functions were used for the numerical experiments. Equations \ref{elliptic}, \ref{cigar} (Elliptic, Cigar), were utilized from \cite{liang2013problem}, Cigtab (Eq. \ref{cigtab}), Griewank \ref{griewank} from \cite{au2012eigenspace}, Quartic (Eq. \ref{quartic} from \cite{jamil2013literature}, Schwefel (Eq. \ref{schwefel}), Rastrigin (Eq. \ref{rastrigin}), Sphere (Eq. \ref{sphere}), and Ellipsoid (Eq. \ref{ellipsoid}) from \cite{finck2010real,BlackBoxOptim}, and Alpine (Eq. \ref{alpine}) from \cite{hussain2017common}. Equations \ref{x_j}, \ref{x-5}, \ref{sin-x}, were developed by the authors. The code implementation for the selected equations appears in file \textit{functions\_opti.jl} in the supplementary computer code. 

The exact variation used in this work is as follows, where we have adopted the notation presented in the Nomenclature section, where $i$ denotes the step of the optimization history, and $j$ the dimension of the design variable $x_{ij}$.  

\begin{equation}
    \label{elliptic}
    \begin{aligned}
        f_{elliptic}(\mathbf{x}_i)=\sum_{j=1}^n{c_j ( x_{ij} +\frac{3}{2})^2}, \text{where} \\
        \mathbf c=10^3 \{0,\frac{1}{n-1},\dots,1\}.
    \end{aligned}
\end{equation}
\begin{equation}
    \label{cigar}
    f_{cigar}(\mathbf{x}_i)=x_1^2+\sum_{j=2}^n{| x_{ij} |}.
\end{equation}
\begin{equation}
    \label{cigtab}
    f_{cigtab}(\mathbf{x}_i)=x_1^2+\sum_{j=2}^{n-1}{| x_{ij} |}+x_n^2.
\end{equation}
\begin{equation}
    \label{griewank}
    f_{griewank}(\mathbf{x}_i)=1+{\frac  {1}{4000}}\sum _{{j=1}}^{n}x_{ij}^{2}-\prod _{{j=1}}^{n}\cos \left({\frac {x_{ij}}{{\sqrt  {j}}}}\right).
\end{equation}
\begin{equation}
    \label{quartic}
    f_{quartic}(\mathbf{x}_i)=\sum_{j=1}^n{j (x_{ij}-2)^4}.
\end{equation}
\begin{equation}
    \begin{aligned}
    \label{schwefel}
    f_{schwefel}(\mathbf{x}_i)=\sum_{j=1}^n{ c_j^2}, \text{where} \\
    {c_j}=\sum_{k=1}^j{(x_{ik}-9)}.
    \end{aligned}
\end{equation}
\begin{equation}
    \label{rastrigin}
    \begin{aligned}
    f_{rastrigin}(\mathbf{x}_i)=10 n + \sum_{j=1}^n{(x_{ij}+\frac{7}{10})^2}
    \\
    -10 \sum_{j=1}^n{\cos (2 \pi (x_{ij}+\frac{7}{10})^2 )}.
    \end{aligned}
\end{equation}
\begin{equation}
    \label{sphere}
    f_{sphere}(\mathbf{x}_i)=\sum_{j=1}^n{(x_{ij}-\frac{13}{10})^2}.
\end{equation}
\begin{equation}
    \begin{aligned}
    \label{ellipsoid}
    f_{elipsoid}(\mathbf{x}_i)=\sum_{j=1}^n{(x_{ij}-\sqrt{2})^2}.
    \end{aligned}
\end{equation}
\begin{equation}
    \label{alpine}
    f_{alpine}(\mathbf{x}_i)=\sum_{j=1}^n{|x_{ij} \sin{x_{ij}} + \frac{1}{10} x_{ij}|}.
\end{equation}
\begin{equation}
    \label{x_j}
    f_{x\_j}(\mathbf{x}_i)=\sum_{j=1}^n{(x_{ij}-j-\frac{21}{10})^2}.
\end{equation}
\begin{equation}
    \label{x-5}
    f_{x\_5}(\mathbf{x}_i)=\sum_{j=1}^n{(x_{ij}-5)^2} - 5.
\end{equation}
\begin{equation}
    \label{sin-x}
    f_{sin\_x}(\mathbf{x}_i)=\sum_{j=1}^n{( \sin(x_{ij}+\frac{7}{10}) + \frac{(x_{ij}+\frac{7}{10})^2}{100}} ).
\end{equation}
\end{appendices}

\bibliographystyle{IEEEtran}
\bibliography{references}

\end{document}